\theoremstyle{plain}
\newtheorem{theorem}{Theorem}[section]
\newtheorem{corollary}{Corollary}[section]
\newtheorem{lemma}{Lemma}[section]
\newtheorem{prop}{Proposition}[section]
\theoremstyle{definition}
\newtheorem{definition}{Definition}[section]
\newtheorem{exa}{Example}[section]
\theoremstyle{remark}
\newtheorem{remark}{Remark}
\numberwithin{equation}{section}
\numberwithin{figure}{section}
\begin{document}

\title[Stochastic rumors on random trees]{Stochastic rumors on random trees}

\date{}

\author[Valdivino V. Junior]{Valdivino V. Junior}  
\address{Valdivino V. Junior. Universidade Federal de Goias, Campus Samambaia, CEP 74001-970, Goi\^ania, GO, Brazil. E-mail: vvjunior@ufg.br}

\author[Pablo Rodriguez]{Pablo M. Rodriguez} 
\address{Pablo Rodriguez. Universidade Federal de Pernambuco, Av. Prof. Moraes Rego, 1235. Cidade Universit\'aria, CEP 50670-901, Recife, PE, Brazil. E-mail: pablo@de.ufpe.br}

\author[Adalto Speroto]{Adalto Speroto} 
\address{Adalto Speroto. Universidade de S\~ao Paulo, Caixa Postal 668, CEP 13560-970, S\~ao Carlos, SP, Brazil. E-mail: speroto@usp.br}

\subjclass[2010]{60K35, 60K37, 82B26}
\keywords{Maki-Thompson Model, Phase-Transition, Homogeneous Tree, Branching Process, Rumor Spreading}

%%%%%%%%%%%%%%%%%%%%%%%%%%%%%%%%%%%%%%%%%%%%%%%%%%%%%%%%%%%%%%%%%

\begin{abstract}
The Maki-Thompson rumor model is defined by assuming that a population represented by a graph is subdivided into three classes of individuals; namely, ignorants, spreaders and stiflers. A spreader tells the rumor to any of its nearest ignorant neighbors at rate one. At the same rate, a spreader becomes a stifler after a contact with other nearest neighbor spreaders, or stiflers. In this work we study the model on random trees. As usual we define a critical parameter of the model as the critical value around which the rumor either becomes extinct almost-surely or survives with positive probability. We analyze the existence of phase-transition regarding the survival of the rumor, and we obtain estimates for the mean range of the rumor. The applicability of our results is illustrated with examples on random trees generated from some well-known discrete distributions. 

\end{abstract}

\maketitle

\section{Introduction}

\subsection{The first models for rumor spreading}\label{ss:intro}

In this work we contribute with the literature of mathematical models for information transmission. As far as we know many of the well-known basic models appeared in the earlier 1960. One of them is widely cited to nowadays and was communicated by Daley and Kendall in \cite{daley_nature}. Such a model appeared as an alternative to the well-known SIR epidemic model for describing the propagation of infectious diseases. While a SIR model assumes that a population is subdivided into susceptible, infected and removed individuals, in Daley and Kendall's model, these classes are called ignorants (those not aware of the rumor), spreaders (who are spreading it) or stiflers (who know the rumor but have ceased communicating it after meeting somebody who has already heard it), respectively. Although the mechanisms for transmission are similar; indeed, ignorants in rumors are similar to susceptible in epidemics, spreaders are similar to infected cases, and stiflers correspond to removals; the difference between these two processes is in the stopping mechanisms. In the propagation of a disease an infected individual becomes removed only after a random time, which is in general independent of what happens with other individuals. On the other hand, in the transmission of a rumor a spreader stops talking about the rumor right after getting involved in a contact with another individual who already knows it. In other words, in a rumor process, a spreader becomes a stifler at a rate which depends on the number of non-ignorant individuals in the population. The communication in \cite{daley_nature} and the results obtained in \cite{kendall} gave rise to a theory of mathematical models for rumor transmission. For a review of some works dealing with rumors we refer the reader to \cite[Section 1]{AECP} and the references therein.

Here we deal with the simplified version of the Daley--Kendall model due to Maki and Thompson in \cite{MT}. The Maki--Thompson model describes the spreading of a rumor on a closed homogeneously mixed population, subdivided into the three classes of individuals mentioned before: ignorants, spreaders, and stiflers. Formally, the Maki--Thompson model may be written as the continuous-time Markov chain $\{(X^{N}(t), Y^{N}(t))\}_{t \in [0,\infty)}$ which evolves according to the following transitions and rates
\begin{equation}\label{eq:transMT}
\begin{array}{cc}
\text{transition} \quad &\text{rate} \\[0.1cm]
(-1, 1) \quad &X Y, \\[0.1cm]
(0, -1) \quad &Y (N -1 - X).
\end{array}
\end{equation}

For $t\geq 0$, the random variables $X^{N}(t)$, $Y^{N}(t)$ and $Z^{N}(t)$ denote, respectively, the number of ignorants, spreaders and stiflers at time $t$. The basic version of the model assumes that $X^{N}(0) = N-1$, $Y^{N}(0) = 1$, $Z^{N}(0) = 0$, and $X^{N}(t) + Y^{N}(t) + Z^{N}(t) = N $ for all~$t$. Thus defined, \eqref{eq:transMT} means that if the process is in state $(i,j)$ at time $t$, then the probabilities that it jumps to states $(i-1,j+1)$ or $(i,j-1)$ at time $t+h$ are, respectively, $i\,j\,h + o(h)$ and $j (N - 1 - i)\,h + o(h)$, where $o(h)$ represents a function such that $\lim_{h\to 0}o(h)/h =0$. This describes the situation in which individuals interact by contacts initiated by the spreaders: the two possible transitions in \eqref{eq:transMT} correspond to spreader-ignorant, and spreader-(spreader or  stifler) interactions. In the first case, the spreader tells the rumor to the ignorant, who becomes a spreader. The other transition represents the transformation of a spreader into a stifler after initiating a meeting with a non-ignorant. Note that the last event describes the loss of interest in propagating the rumor derived from learning that it is already known by the other individual in the meeting. This is the main difference between rumor models and mathematical models for infectious diseases. 

A question of interest when studying a stochastic rumor model on a finite population is what we can say about the remaining proportion of people who never hear the rumor. Note that, since the process eventually ends, this is one way of having a measure of the ``size'' of the rumor. The first rigorous results in this direction are limit theorems for the remaining proportion of ignorants when the process ends, as the population size grows to $\infty$. It has been proved that, for both the Daley--Kendall and the Maki--Thompson models, such final  proportion of ignorants equals approximately $20\%$, see \cite{kendall,sudbury,watson}. In recent works, the main approach to deal with such a question for generalized rumor models is a suitable application of the theory of convergence of density dependent Markov chains. This allows to find a system of ordinary differential equations whose solution represents an approximation, for sufficiently large $N$, of a scaled version of the entire trajectories of the stochastic model. Moreover, these solutions may be seen as good approximations for the proportion of ignorants and spreaders, respectively, at time $t$, for $t>0$ and $N$ large enough. An extensive treatment of rumor models can be found in \cite[Chapter~5]{dg}. In addition, for a deeper discussion of the use of density dependent Markov chains for studying rumor models we refer the reader to \cite{arrudaroyal,lebensztayn/machado/rodriguez/2011a,lebensztayn/machado/rodriguez/2011b,AECP}.

\subsection{Changing the structure of the population. Why random trees?}

The Maki--Thompson model studied by the references mentioned in Subsection \ref{ss:intro} is formulated assuming a closed homogeneously mixed population. If we represent the population with a graph, that assumption means that the considered graph is the complete graph, see Figure \ref{fig:graphs}(a). With such an interpretation, vertices represent individuals and edges represent possible contacts between them. By considering a complete graph, the theoretical analysis of the model is simplified, in some sense, because it is enough to deal with the proportions of individuals in each one of the classes. This is why it is less difficult to deal with generalizations of the model in homogeneously mixed populations. However, when one want to consider a different structure of the population the approach should be changed and, in most cases, it depends on the considered graph.

\begin{figure}[h!]
\begin{center}
\subfigure[][Complete graph $\mathbb{K}_{10}$.]{

\includegraphics[width=5.5cm]{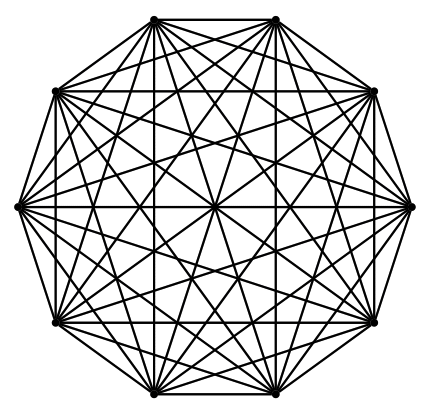}

}\qquad\qquad\subfigure[][Hypercubic lattice $\mathbb{Z}^2$.]{
\begin{tikzpicture}[scale=0.7]
\draw (-3.5,3) -- (3.5,3);
\draw (-3.5,2) -- (3.5,2);
\draw (-3.5,1) -- (3.5,1);
\draw (-3.5,0) -- (3.5,0);
\draw (-3.5,-1) -- (3.5,-1);
\draw (-3.5,-2) -- (3.5,-2);
\draw (-3.5,-3) -- (3.5,-3);
\draw[dotted] (-3.8,0) -- (-4.1,0);
\draw[dotted] (-3.8,2) -- (-4.1,2);
\draw[dotted] (-3.8,-2) -- (-4.1,-2);
\draw[dotted] (3.8,0) -- (4.1,0);
\draw[dotted] (3.8,2) -- (4.1,2);
\draw[dotted] (3.8,-2) -- (4.1,-2);
\draw[dotted] (0,-3.8) -- (0,-4.1);
\draw[dotted] (2,-3.8) -- (2,-4.1);
\draw[dotted] (-2,-3.8) -- (-2,-4.1);
\draw[dotted] (0,3.8) -- (0,4.1);
\draw[dotted] (2,3.8) -- (2,4.1);
\draw[dotted] (-2,3.8) -- (-2,4.1);
%\draw[dashed,gray] (-3.5,3.5) -- (3.5,3.5);

\draw (-3,-3.5) -- (-3,3.5);
\draw (-2,-3.5) -- (-2,3.5);
\draw (-1,-3.5) -- (-1,3.5);
\draw (0,-3.5) -- (0,3.5);
\draw (1,-3.5) -- (1,3.5);
\draw (2,-3.5) -- (2,3.5);
\draw (3,-3.5) -- (3,3.5);

% vertices

\filldraw (-3,3) circle (1.5pt);
\filldraw (-3,2) circle (1.5pt);
\filldraw (-3,1) circle (1.5pt);
\filldraw (-3,0) circle (1.5pt);
\filldraw (-3,-1) circle (1.5pt);
\filldraw (-3,-2) circle (1.5pt);
\filldraw (-3,-3) circle (1.5pt);
\filldraw (-2,3) circle (1.5pt);
\filldraw (-2,2) circle (1.5pt);
\filldraw (-2,1) circle (1.5pt);
\filldraw (-2,0) circle (1.5pt);
\filldraw (-2,-1) circle (1.5pt);
\filldraw (-2,-2) circle (1.5pt);
\filldraw (-2,-3) circle (1.5pt);
\filldraw (3,3) circle (1.5pt);
\filldraw (3,2) circle (1.5pt);
\filldraw (3,1) circle (1.5pt);
\filldraw (3,0) circle (1.5pt);
\filldraw (3,-1) circle (1.5pt);
\filldraw (3,-2) circle (1.5pt);
\filldraw (3,-3) circle (1.5pt);
\filldraw (2,3) circle (1.5pt);
\filldraw (2,2) circle (1.5pt);
\filldraw (2,1) circle (1.5pt);
\filldraw (2,0) circle (1.5pt);
\filldraw (2,-1) circle (1.5pt);
\filldraw (2,-2) circle (1.5pt);
\filldraw (2,-3) circle (1.5pt);
\filldraw (-1,3) circle (1.5pt);
\filldraw (-1,2) circle (1.5pt);
\filldraw (-1,1) circle (1.5pt);
\filldraw (-1,0) circle (1.5pt);
\filldraw (-1,-1) circle (1.5pt);
\filldraw (-1,-2) circle (1.5pt);
\filldraw (-1,-3) circle (1.5pt);
\filldraw (1,3) circle (1.5pt);
\filldraw (1,2) circle (1.5pt);
\filldraw (1,1) circle (1.5pt);
\filldraw (1,0) circle (1.5pt);
\filldraw (1,-1) circle (1.5pt);
\filldraw (1,-2) circle (1.5pt);
\filldraw (1,-3) circle (1.5pt);
\filldraw (0,3) circle (1.5pt);
\filldraw (0,2) circle (1.5pt);
\filldraw (0,1) circle (1.5pt);
\filldraw (0,0) circle (1.5pt);
\filldraw (0,-1) circle (1.5pt);
\filldraw (0,-2) circle (1.5pt);
\filldraw (0,-3) circle (1.5pt);
%\draw[white,ultra thick] (3,1) -- (3,2);

\draw [->,line width=0.4mm,white] (3.1,1.5) to [out=10, in=180]  (5.3,2);

\end{tikzpicture}}

\subfigure[][A small-world like graph.]{

\includegraphics[width=5.7cm]{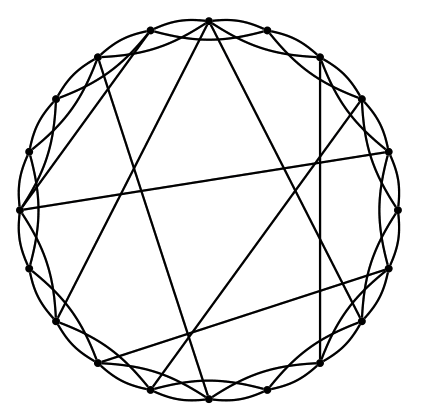}

}\subfigure[][Cayley tree $\mathbb{T}_2$.]{

\tikzstyle{level 1}=[sibling angle=120]
\tikzstyle{level 2}=[sibling angle=80]
\tikzstyle{level 3}=[sibling angle=60]
\tikzstyle{level 4}=[sibling angle=30]
%\tikzstyle{every node}=[fill]

\tikzstyle{edge from parent}=[segment length=0.6cm,segment angle=10,draw]
\begin{tikzpicture}[scale=0.45,grow cyclic,shape=circle,minimum size = 1pt,inner sep=0.8pt,level distance=15mm,
                    cap=round]
\node[fill] {} child [\A] foreach \A in {black,black,black}
    { node[fill] {} child [color=\A!50!\B] foreach \B in {black,black}
        { node[fill] {} child [color=\A!50!\B!50!\C] foreach \C in {black,black}
            { node[fill] {} child [color=\A!50!\B!50!\C!50!\D] foreach \D in {black,black}
            { node[fill] {} }
        }
    }
    };
    
\node at (-0.5,0) {\tiny ${\bf 0}$};

\draw[dotted] (6.5,0) -- (7,0);
\draw[dotted] (-6.5,0) -- (-7,0);
\draw[dotted] (0,6.5) -- (0,7);
\draw[dotted] (0,-6.5) -- (0,-7);
\draw[dotted] (4.5,4.5) -- (4.8,4.8);
\draw[dotted] (4.5,-4.5) -- (4.8,-4.8);
\draw[dotted] (-4.5,-4.5) -- (-4.8,-4.8);
\draw[dotted] (-4.5,4.5) -- (-4.8,4.8);
\end{tikzpicture}}
\caption{Some families of graphs used to represent a population in rigorous mathematical models for rumor spreading.}\label{fig:graphs}
\end{center}
\end{figure}

In a general graph the model may be defined, roughly speaking, by assuming that the set of vertices representing the population is subdivided into three classes; namely, ignorants, spreaders and stiflers. Then a spreader tells the rumor to any of its nearest ignorant neighbors at rate one, and, at the same (but could be other) rate, a spreader becomes a stifler after a contact with other nearest neighbor spreaders, or stiflers. The mathematical model, which is stochastic, is an interacting particle system. We shall formalize this type of model in Section \ref{s:model}. The first rigorous result for a modified version of the Maki--Thompson in a non-complete graph is due to \cite{CRS}, where the authors represent the population with an hypercubic lattice $\mathbb{Z}^d$, for $d\geq 2$, see Figure \ref{fig:graphs}(b), and obtain results of phase-transition according to the values of general rates of interactions between the classes of individuals. In such a work the approach relies on coupling techniques and a combination of results coming from oriented percolation and the so-called contact process. See \cite{grimmett} for a review of these subjects. Note that $\mathbb{Z}^d$ is an infinite graph. On the other hand, \cite{EAP} studies the Maki--Thompson model on a small-world graph. The graph is constructed by starting from a $k$-regular ring and by inserting, in the average, $c$ additional links in such a way that $k$ and $c$ are tuneable parameters for the population structure; see Figure \ref{fig:graphs}(c). The authors prove that this system exhibits a kind of transition between regimes of localization (where the final number of stiflers is at most logarithmic in the population size) and propagation (where the final number of stiflers grows algebraically with the population size) depending of the network parameter $c$ being small our large enough, respectively. The approach for that analysis was, again, coupling techniques, but now combined together with the theory of branching processes. More recently, in order to understand the behavior of the model on trees, \cite{junior} consider the generalization called the Maki--Thomposn model with $k$-stifling on infinite Cayley trees $\mathbb{T}_d$, for $d\geq 2$; see Figure \ref{fig:graphs}(d). These are deterministic infinite trees such that any vertex has degree $d+1$. The theory of branching processes was used for this study as well and the main results are related to the analysis of the existence of a critical $d$ around which the rumor either becomes extinct almost-surely or survives with positive probability. In addition, the authors obtain information about the range of spreading. 
In this work we extend the approach of \cite{junior} to deal with stochastic rumors on random trees, which can be seen as the family trees produced by a branching process. In words, we start with one vertex which is the root of the tree and let it connect according to a given discrete distribution. Each of these neighbors (if there are any) then connect with new vertices independently with the same law, and so on forever or until some generation goes extinct. These trees are called Galton-Watson trees, or random trees for short. 
This case gains in interest if we realize that random trees naturally appears ``embebed'' in many models of random graphs and complex networks. It is well-known that many of such models, which are more appropriated to represent a population, behave locally as trees. Up to now, stochastic rumors in random graphs or complex networks were studied mainly by mean of computational simulations or mean-field approximations. See for example \cite{arruda3,MNP-PRE2004,moreno-PhysA2007,zanette02}. As a sideline, we point out that recent research suggest that random trees are suitable structures to represent the spreading of information between population, see for example \cite{gleeson} where branching processes are used to model propagation of information on Twitter.  On the other hand, for a review of some long-range propagation models on (random) trees we refer the reader to \cite{cone,cone2}.    

We analyze the existence of a critical value, depending of the law generating the tree, around which the rumor either becomes extinct almost-surely or survives with positive probability. Following \cite{junior}, we also obtain information about the range of spreading. Our work is complemented with some examples and a discussion for further research.  

The paper is organized as follows. In Section 2 we formalize the mathematical model and present the main results. Subsection 2.1 is dedicated to a review of definitions for the model on infinite Cayley trees, and from Subsection 2.2 onward the model on random trees and all the results of our work are presented. Section 3 es devoted to the proof of our main theorems and Section 4 is left to a discussion of further research.

\section{The model and main results}\label{s:model}

\subsection{The model on an infinite Cayley tree}

In this work we shall work with infinite trees $\mathbb{T}=(\mathcal{V},\mathcal{E})$. As usual $ \mathcal{V} $ stands for the set of vertices and $\mathcal{E} \subset \{\{u,v\}: u,v \in \mathcal{V}, u \neq v\}$ stands for the set of edges. We shall abuse notation by writing $\mathcal{V}=\mathbb{T}$. We consider rooted trees in the sense that we shall identify one vertex as the root and denote it by ${\bf 0}$, see Figure \ref{fig:graphs}(d). If $\{u,v\}\in \mathcal{E}$, we say that $u$ and $v$ are neighbors, and we denote it by $u\sim v$. The degree of a vertex $v$, denoted by $deg(v)$, is the number of its neighbors. A path in $\mathbb{T}$ is a finite sequence $v_0, v_1, \dots, v_n $ of distinct vertices such that $ v_i \sim v_{i+1} $ for each $i$, and a ray in $\mathbb{T}$ is a path with infinite vertices starting at ${\bf 0}$. For any tree, there is a unique path connecting any pair of distinct vertices $u$ and $v$. Therefore we define the distance between them, denoted by $d(u,v)$, as the number of edges in such path. We denote by $\mathbb{T}_d$ the infinite Cayley tree of coordination number $d+1$, with $d\geq 2$. The notation is because the same graph is known as $(d+1)$-dimensional homogeneous tree, which is a graph with an infinite number of vertices, without cycles and such that every vertex has degree $d+1$. For each $v\in \mathcal{V}$ define $|v|:=d({\bf{0}},v)$. For $ u,v \in \mathcal{V} $, we say that $u\leq v$ if $u$ is one of the vertices of the path connecting ${\bf{0}}$ and $v$; $u<v$ if $u\leq v$ and $u\neq v$. We call $v$ a \textit{successor} of $u$ if $u\leq v$ and $u \sim v$. We denote by $\partial \mathbb{T}_{n}$ the set of vertices at distance $n$ from the root. That is, $\partial \mathbb{T}_{n}= \{v \in \mathbb{T}: |v|=n\}$. 

%We call $v$ a \textit{descendant} of $u$ if $u\leq v$ and denote by $T^u = \{v \in \mathcal{V}: u \leq v\}$ the set of descendants of $u$. On the other hand, $v$ is said to be a \textit{successor} of $u$ if $u\leq v$ and $u \sim v$. For $n\geq 1$, we denote by $\partial \mathbb{T}_{n}$ the set of vertices at distance $n$ from the root. That is, $\partial \mathbb{T}_{n}= \{v \in \mathbb{T}: |v|=n\}$. 

The Maki-Thompson model on an infinite tree $\mathbb{T}$ may be defined as a continuous-time Markov process $(\eta_t)_{t\geq 0}$ with states space $\mathcal{S}=\{0,1,2\}^{\mathbb{T}}$; i.e. at time $t$ the state of the process is some function $\eta_t: \mathbb{T} \longrightarrow \{0,1,2\}$. We assume that each vertex $x \in \mathbb{T}$ represents an individual, which is said to be an ignorant if $\eta(x)=0,$ a spreader if $\eta(x)=1$, and a stifler if $\eta(x)=2.$ Then, if the system is in configuration $\eta \in \mathcal{S},$ the state of vertex $x$ changes according to the following transition rates

\begin{equation}\label{rates}
\begin{array}{rclc}
&\text{transition} &&\text{rate} \\[0.1cm]
0 & \rightarrow & 1, & \hspace{.5cm}  n_{1}(x,\eta),\\[.2cm]
1 & \rightarrow & 2, & \hspace{.5cm}   n_{1}(x,\eta) + n_{2}(x,\eta),
\end{array}
\end{equation}

\noindent where $$n_i(x,\eta)= \sum_{y\sim x} 1\{\eta(y)=i\}$$ 
is the number of nearest neighbors of vertex $x$ in state $i$ for the configuration $\eta$, for $i\in\{1,2\}.$ Formally, \eqref{rates} means that if the vertex $x$ is in state, say, $0$ at time $t$ then the probability that it will be in state $1$ at time $t+h$, for $h$ small, is $n_{1}(x,\eta) h + o(h)$, where $o(h)$ represents a function such that $\lim_{h\to 0} o(h)/h = 0$. Note that the rates in \eqref{rates} represent how the changes of states of individuals depend on the states of its neighbors. While the change of state of an ignorant is influenced by its spreader neighbors, the change of state for a spreader is influenced by the number of non-ignorant neighbors. We point out that stiflers do not interact with ignorants. See Figure \ref{fig:realization} for an illustration of these transitions.

\begin{figure}[h!]
    \centering
\begin{tikzpicture}

\filldraw [black] (0,4) circle (3pt);
\node at (0.9,4) {\footnotesize : ignorant};

\filldraw [red!80!black] (3,4) circle (3pt);
\node at (3.9,4) {\footnotesize : spreader};

\filldraw [blue!30!gray] (6,4) circle (3pt);
\node at (6.7,4) {\footnotesize : stifler};
\draw[gray] (-1,3) -- (9,3);

\node at (-2,-1.5) {\footnotesize $\bf (a)$};

\node[rotate=330] at (0,0) {

\begin{tikzpicture}

\draw[thick] (0,0) -- (2,0) -- (4,0);
\draw[thick] (2,0) -- (4,1.5);
\draw[thick] (2,0) -- (4,-1.5);
\draw[thick,gray,dashed] (4,0) -- (4.5,0);
\draw[thick,gray,dashed] (4,0) -- (4.5,0.5);
\draw[thick,gray,dashed] (4,0) -- (4.5,-0.5);
\draw[thick,gray,dashed] (4,1.5) -- (4.5,1.95);
\draw[thick,gray,dashed] (4,1.5) -- (4,2);
\draw[thick,gray,dashed] (4,1.5) -- (4.5,1.5);
\draw[thick,gray,dashed] (4,-1.5) -- (4.5,-1.95);
\draw[thick,gray,dashed] (4,-1.5) -- (4,-2);
\draw[thick,gray,dashed] (4,-1.5) -- (4.5,-1.5);
\draw[thick,gray,dashed] (0,0) -- (-0.5,0);
\draw[thick,gray,dashed] (0,0) -- (-0.5,0.5);
\draw[thick,gray,dashed] (0,0) -- (-0.5,-0.5);

\filldraw [red!80!black] (0,0) circle (3pt);

\draw [->,line width=0.4mm,gray] (0.1,0.2) to [out=30, in=150]  (1.9,0.2);

\filldraw [black] (2,0) circle (3pt);
\filldraw [black] (4,0) circle (3pt);
\filldraw [black] (4,1.5) circle (3pt);
\filldraw [black] (4,-1.5) circle (3pt);
\end{tikzpicture}};

\node at (6,-1.5) {\footnotesize  $\bf (b)$};

\node[rotate=330] at (8,0) {

\begin{tikzpicture}

\draw[thick] (0,0) -- (2,0) -- (4,0);
\draw[thick] (2,0) -- (4,1.5);
\draw[thick] (2,0) -- (4,-1.5);
\draw[thick,gray,dashed] (4,0) -- (4.5,0);
\draw[thick,gray,dashed] (4,0) -- (4.5,0.5);
\draw[thick,gray,dashed] (4,0) -- (4.5,-0.5);
\draw[thick,gray,dashed] (4,1.5) -- (4.5,1.95);
\draw[thick,gray,dashed] (4,1.5) -- (4,2);
\draw[thick,gray,dashed] (4,1.5) -- (4.5,1.5);
\draw[thick,gray,dashed] (4,-1.5) -- (4.5,-1.95);
\draw[thick,gray,dashed] (4,-1.5) -- (4,-2);
\draw[thick,gray,dashed] (4,-1.5) -- (4.5,-1.5);
\draw[thick,gray,dashed] (0,0) -- (-0.5,0);
\draw[thick,gray,dashed] (0,0) -- (-0.5,0.5);
\draw[thick,gray,dashed] (0,0) -- (-0.5,-0.5);

\filldraw [red!80!black] (0,0) circle (3pt);
\filldraw [red!80!black] (2,0) circle (3pt);

\draw [->,line width=0.4mm,gray] (2,0.2) to [out=90, in=180]  (3.8,1.5);

\filldraw [black] (4,0) circle (3pt);
\filldraw [black] (4,1.5) circle (3pt);
\filldraw [black] (4,-1.5) circle (3pt);
\end{tikzpicture}};

\node at (-2,-7.5) {\footnotesize   $\bf (c)$};

\node[rotate=330] at (0,-6) {

\begin{tikzpicture}

\draw[thick] (0,0) -- (2,0) -- (4,0);
\draw[thick] (2,0) -- (4,1.5);
\draw[thick] (2,0) -- (4,-1.5);
\draw[thick,gray,dashed] (4,0) -- (4.5,0);
\draw[thick,gray,dashed] (4,0) -- (4.5,0.5);
\draw[thick,gray,dashed] (4,0) -- (4.5,-0.5);
\draw[thick,gray,dashed] (4,1.5) -- (4.5,1.95);
\draw[thick,gray,dashed] (4,1.5) -- (4,2);
\draw[thick,gray,dashed] (4,1.5) -- (4.5,1.5);
\draw[thick,gray,dashed] (4,-1.5) -- (4.5,-1.95);
\draw[thick,gray,dashed] (4,-1.5) -- (4,-2);
\draw[thick,gray,dashed] (4,-1.5) -- (4.5,-1.5);
\draw[thick,gray,dashed] (0,0) -- (-0.5,0);
\draw[thick,gray,dashed] (0,0) -- (-0.5,0.5);
\draw[thick,gray,dashed] (0,0) -- (-0.5,-0.5);

\filldraw [red!80!black] (0,0) circle (3pt);

\draw [->,line width=0.4mm,gray] (0.1,0.2) to [out=30, in=150]  (1.9,0.2);

\filldraw [red!80!black] (2,0) circle (3pt);
\filldraw [black] (4,0) circle (3pt);
\filldraw [red!80!black] (4,1.5) circle (3pt);
\filldraw [black] (4,-1.5) circle (3pt);
\end{tikzpicture}};

\node at (6,-7.5) {\footnotesize $\bf (d)$};

\node[rotate=330] at (8,-6) {

\begin{tikzpicture}

\draw[thick] (0,0) -- (2,0) -- (4,0);
\draw[thick] (2,0) -- (4,1.5);
\draw[thick] (2,0) -- (4,-1.5);
\draw[thick,gray,dashed] (4,0) -- (4.5,0);
\draw[thick,gray,dashed] (4,0) -- (4.5,0.5);
\draw[thick,gray,dashed] (4,0) -- (4.5,-0.5);
\draw[thick,gray,dashed] (4,1.5) -- (4.5,1.95);
\draw[thick,gray,dashed] (4,1.5) -- (4,2);
\draw[thick,gray,dashed] (4,1.5) -- (4.5,1.5);
\draw[thick,gray,dashed] (4,-1.5) -- (4.5,-1.95);
\draw[thick,gray,dashed] (4,-1.5) -- (4,-2);
\draw[thick,gray,dashed] (4,-1.5) -- (4.5,-1.5);
\draw[thick,gray,dashed] (0,0) -- (-0.5,0);
\draw[thick,gray,dashed] (0,0) -- (-0.5,0.5);
\draw[thick,gray,dashed] (0,0) -- (-0.5,-0.5);

\filldraw [blue!30!gray] (0,0) circle (3pt);

\draw [->,line width=0.4mm,white] (0.1,0.2) to [out=30, in=150]  (1.9,0.2);

\filldraw [red!80!black] (2,0) circle (3pt);
\filldraw [black] (4,0) circle (3pt);
\filldraw [red!80!black] (4,1.5) circle (3pt);
\filldraw [black] (4,-1.5) circle (3pt);
\end{tikzpicture}};

\end{tikzpicture}

    \caption{Possible realization of the MT-model on a tree $\mathbb{T}$. The vertices of the tree represent individuals belonging to one of the following tree classes: ignorants, spreaders and stiflers, which we identify by black, red, and blue vertices, respectively. (a) A spreader tells the rumor to any of its nearest ignorant neighbors at rate one. (b) After the previous interaction the contacted ignorant becomes a spreader and starts to transmit the information. (c)-(d) At the same rate, a spreader becomes a stifler after a contact with other nearest neighbor spreaders, or stiflers.}
    \label{fig:realization}
\end{figure}
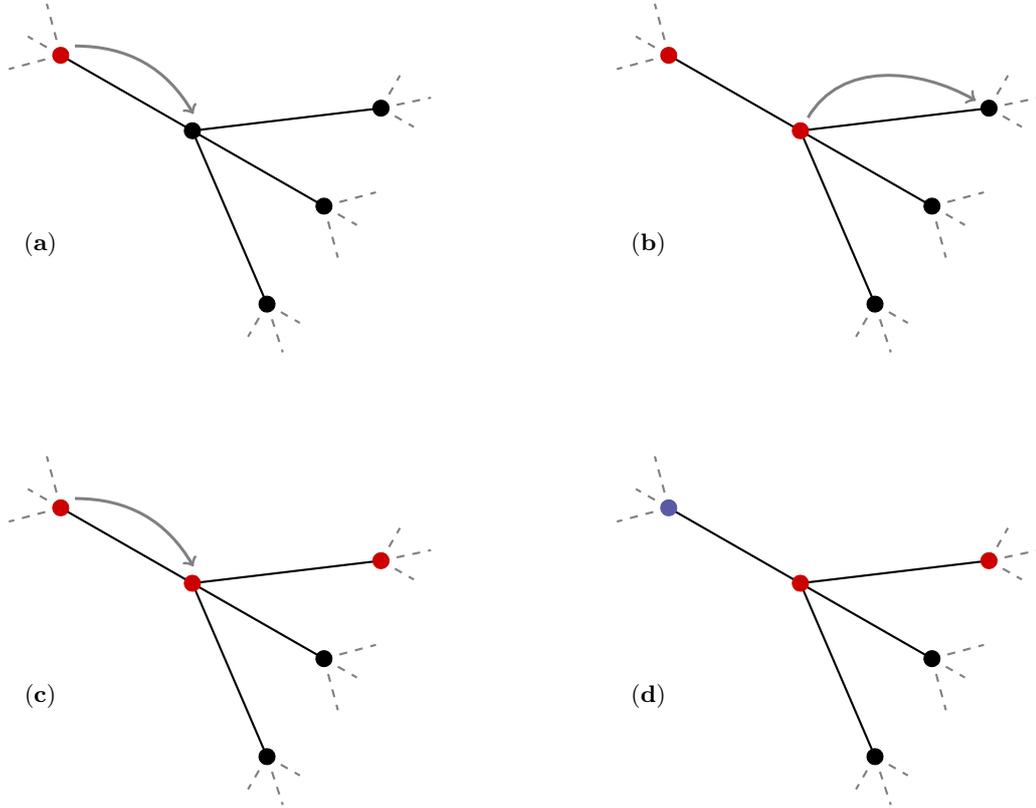

We call the Markov process $(\eta_t)_{t\geq 0}$ the Maki-Thompson rumor model on $\mathbb{T}$, and we abbreviate as MT-model on $\mathbb{T}$. In addition, we refer to the case when $\eta_0({\bf{0}})=1$ and $\eta_0(x)=0$ for all $x\neq {\bf 0}$ as the \textit{standard initial configuration}.     

\smallskip
\begin{definition}\cite[Definition 1]{junior}
Consider the MT-model on $\mathbb{T}$ with the standard initial configuration. We say that there is survival of the rumor if there exist a sequence $\{(v_i,t_i)\}_{i\geq 0}$, with $(v_i,t_i)\in \mathbb{T} \times \mathbb{R}^+$, such that $ v_0={\bf{0}}$, $t_0=0$, $v_{i+1}$ is a successor of $v_i$, $t_i < t_{i+1}$, and $\eta_{t_i}(v_i)=1$, for all $i\geq 0.$ If there is not survival, we say that the rumor becomes extinct. We denote by $\theta(\mathbb{T})$ the survival probability.
\end{definition}

Note that there is survival of the rumor provided we can guarantee the existence of a ray from the root of $\mathbb{T}$ such that all the vertices in the ray were spreaders at some time. The MT-model on an infinite Cayley tree of coordination number $d+1$ was studied in \cite{junior}. In what follows, for simplicity, we let $\theta(d):=\theta(\mathbb{T}_d)$.

\smallskip
\begin{theorem}\label{thm:MTpt}\cite[Theorem 1]{junior}
Consider the MT-model on $\mathbb{T}_d$ with the standard initial configuration. Then $\theta(d)>0$ if, and only if, $d\geq 3$. Moreover, 
%$$\theta(d) =  1-  \sum_{i=1}^{d+1} i! \dbinom{d}{i-1}\left(\frac{\psi}{d+1}\right)^i,$$
$$\theta(d) =  1-  \psi^2,$$
where $\psi$ is the smallest non-negative root of the equation
$$\sum_{i=0}^{d} i! \dbinom{d}{i}\left(\frac{s}{d+1}\right)^i\left(\frac{i+1}{d+1}\right)=s.$$
\end{theorem}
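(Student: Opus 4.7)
The plan is to reduce the MT-model on $\mathbb{T}_d$ to a Galton-Watson branching process that counts the vertices which ever become spreaders. The key structural observation, made possible by the tree structure, is that every non-root vertex $v$ is necessarily infected by its unique parent, and from that moment on the parent is permanently non-ignorant; since both spreader and stifler contribute $1$ to $v$'s stifling rate, the parent contributes a constant $+1$ to the stifling rate of $v$ regardless of its precise state. Consequently the subtree rooted at $v$ evolves independently of the rest of the graph given the infection time of $v$, and the set of vertices that ever become spreaders forms a genuine Galton-Watson tree.

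Next, I would identify the two offspring distributions involved. Let $K$ be the number of neighbors of $\mathbf{0}$ that become spreaders before $\mathbf{0}$ turns into a stifler, and, for any non-root spreader $v$, let $K'$ be the analogous count among the children of $v$. At the root, when $j$ of its $d+1$ neighbors are already non-ignorant, the next relevant transition is ``root infects an ignorant'' at total rate $d+1-j$ or ``root stifles'' at rate $j$; racing the competing exponentials gives
\begin{equation*}
P(K=k)=\prod_{j=0}^{k-1}\frac{d+1-j}{d+1}\cdot \frac{k}{d+1}=\frac{k\,k!\binom{d+1}{k}}{(d+1)^{k+1}},\qquad k=1,\dots,d+1.
\end{equation*}
At a non-root spreader there is always one extra non-ignorant neighbor (the parent), so the same reasoning yields
\begin{equation*}
P(K'=i)=\prod_{j=0}^{i-1}\frac{d-j}{d+1}\cdot \frac{i+1}{d+1}=\frac{(i+1)\,i!\binom{d}{i}}{(d+1)^{i+1}},\qquad i=0,\dots,d,
\end{equation*}
whose generating function $\phi_{K'}(s)=E[s^{K'}]$ coincides exactly with the left-hand side of the functional equation in the statement.

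By standard branching-process theory, the extinction probability starting from a single non-root spreader is then the smallest non-negative solution $\psi$ of $\phi_{K'}(s)=s$, and the rumor survives from $\mathbf{0}$ with probability $\theta(d)=1-E[\psi^{K}]$. The final algebraic step is the identity $E[\psi^{K}]=\psi^{2}$: reindexing with $m=k-1$ in $E[\psi^{K-1}]$ and using $\binom{d+1}{m+1}=\tfrac{d+1}{m+1}\binom{d}{m}$ collapses the sum into $\phi_{K'}(\psi)=\psi$, whence $E[\psi^{K}]=\psi\cdot E[\psi^{K-1}]=\psi^{2}$.

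Finally, positivity of the survival probability is equivalent to $\psi<1$, i.e.\ to supercriticality $E[K']>1$. A direct computation using $E[K']=\sum_{k=1}^{d}d(d-1)\cdots(d-k+1)/(d+1)^{k}$ gives $E[K']=8/9<1$ for $d=2$ and $E[K']=78/64>1$ for $d=3$, and monotonicity in $d$ then yields $\theta(d)>0$ iff $d\geq 3$. The main obstacle will be justifying the branching-process reduction rigorously, i.e.\ the independence of the subtree dynamics; this requires the strong Markov property applied at the random times at which vertices become spreaders, together with the crucial remark that the parent's exact state (spreader or stifler) is irrelevant for the subtree's evolution.
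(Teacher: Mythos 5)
Your proposal is correct and follows essentially the same route as the paper's source for this result \cite{junior}: the same two offspring laws (the root's $K$ supported on $\{1,\dots,d+1\}$ and the non-root spreader's $K'$ obtained from the race of exponentials with the parent contributing a constant $+1$ to the stifling rate), the same reduction to a Galton--Watson process whose survival is equivalent to survival of the rumor, and the same appeal to standard extinction theory, with your identity $\mathbb{E}[\psi^{K}]=\psi\,\phi_{K'}(\psi)=\psi^{2}$ correctly supplying the form $\theta(d)=1-\psi^{2}$ stated here. The only cosmetic difference is the supercriticality check, where you compute $\mathbb{E}[K']=8/9$ at $d=2$ and $78/64$ at $d=3$ and invoke monotonicity of $\mathbb{E}[K']$ in $d$ (which does hold termwise), whereas the original proof argues via $\mathbb{P}(X=2)>\mathbb{P}(X=0)$ for $d\geq 3$; both are valid.
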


\smallskip

In words, Theorem \ref{thm:MTpt} states that the MT-model exhibit a phase transition. Indeed, for $d=2$, we obtain that the rumor propagates, almost surely, only to a finite set of individuals. In the other cases, for $d\geq 3$, the rumor propagates to infinitely many individuals with positive probability. We point out that although is not considered here, the case $d=1$ may be seen as the path graph with infinite vertices. In that case it is not difficult to see that the number of spreaders will be bounded from above by the sum of two random variables with geometric law. At this point, a curious reader could ask what we can say about more general trees? We address this question by considering the model on random trees. This is the purpose of the next section where we include the main results of this work.

\subsection{The model on a random tree and first results}\label{ss:randomtree}

One way to construct trees randomly is though a the family trees produced by a branching process. In words, we start with one vertex which is the root of the tree and let it connect according to a given discrete distribution; i.e., it has $n$ neighbors with probability $p_n$ for any $n\in\mathbb{N}\cup\{0\}$. Each of these neighbors (if there are any) then connect with new vertices independently with the same law, and so on forever or until some generation goes extinct. These trees are called Galton-Watson trees, or random trees for short. For the sake of simplicity in the exposition, and without loss of generality, our results will be obtained for the MT-model on the tree obtained from a random tree by adding a particular vertex, that we call $s$, connected to its root. We assume that $s$ is the source of the rumor; i.e., the only vertex with the information at time zero. Since $s$ is only connected with the root, then it will spreads the information to it so, from now on, we shall call the standard initial configuration the one for which $\eta_0(s)=\eta_0({\bf 0})=1$, and $\eta_0(x)=0$ for all $x\in V\setminus \{s,{\bf 0}\}$. In addition, as we are interested in to define the model on a tree with infinitely many vertices, we consider super-critical Galton-Watson trees on the event of non-extinction. Notice that this happens provided $\mathbb{E}(\xi)>1$.  In what follows we use the notation $\mathbb{T}_{\xi}$ for such a random tree for which the number of successors of a given vertex $v\neq s$ is given by an independent copy of the non-negative integer valued random variable $\xi$ with $1<\mathbb{E}(\xi)<\infty$. Note that whether $\xi\equiv d$, for $d\geq 2$, we obtain the infinite Cayley tree $\mathbb{T}_d$ with an additional vertex connected to the root. % Indeed, if $m:=E(\xi)$, we shall be able to consider trees for which $m$ is between $2$ and $3$, generalizing Theorem \ref{thm:MTpt}. 

We start by characterizing the law of the number of spreaders one spreader generates.

\begin{prop}\label{prop:distxxi}
Let $\xi$ be a non-negative integer valued random variable, and let $(\eta_t)_{t\ge0}$ be the MT-model on $\mathbb{T}_{\xi}$ with the standard initial configuration. If $X(\xi)$ denotes the number of spreaders one spreader generates, then
\begin{equation}\label{eq:xxi}
\mathbb{P}( X(\xi) = i) = (i+1)! \sum_{d=i}^{\infty} \dbinom{d}{i} \left (\frac{1}{d+1} \right )^{i+1} \mathbb{P}(\xi = d).
\end{equation}
Moreover, the generating probability function and the mean of $X(\xi)$ are given, respectively, by
\begin{equation}\label{eq:gflouca}
    G_{\xi}(s) = \sum_{d=0}^{\infty}e^{\frac{d+1}{s}}  \left [ \Gamma \left (d+1,\frac{d+1}{s} \right ) - \frac{d(d+1)}{s}  \Gamma \left (d,\frac{d+1}{s} \right ) \right ] \mathbb{P}( \xi = d) \left (\frac{s}{d+1} \right )^d
\end{equation}
and
\begin{equation}\label{eq:meanxxi}
\mathbb{E}(X(\xi)) = \sum_{d=1}^{\infty} \left [ \frac{e^{d+1} \Gamma (d+1,d+1)}{(d+1)^d}  \mathbb{P}(\xi = d) \right ]- \mathbb{P}(\xi \neq 0),
\end{equation}
where $\Gamma\left(k,x\right):=(k-1)!e^{-x}\displaystyle\sum_{i=0}^{k-1}\frac{x^i}{i!},$ is the incomplete gamma function.
\end{prop}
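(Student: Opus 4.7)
The plan is to work conditional on $\xi=d$ and construct the dynamics via independent rate-one Poisson clocks on the directed edges of the tree. When vertex $v$ becomes a spreader, its parent is already non-ignorant while its $d$ children are still ignorant; for each neighbor $u$ of $v$ I attach a clock $A_{v\to u}$ whose rings --- provided $v$ is still a spreader --- either turn $u$ into a spreader (if $u$ is ignorant) or stifle $v$ (if $u$ is non-ignorant). Let $\tau_{0}$ be the first ring of the parent-directed clock and $\tau_{i}^{(1)}<\tau_{i}^{(2)}$ the first two rings of the $i$-th child-directed clock. A direct case analysis shows
\[T^{\ast}=\min\bigl(\tau_{0},\tau_{1}^{(2)},\ldots,\tau_{d}^{(2)}\bigr),\qquad X(\xi)\big|_{\xi=d}=\#\{i:\tau_{i}^{(1)}<T^{\ast}\},\]
with $\tau_{0}\sim\mathrm{Exp}(1)$ and the $\tau_{i}^{(2)}$ independent $\mathrm{Gamma}(2,1)$. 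The elementary facts I will repeatedly use are $\mathbb{P}(\tau_{i}^{(2)}>t)=(1+t)e^{-t}$ and $\mathbb{P}(\tau_{i}^{(1)}<t\mid\tau_{i}^{(2)}>t)=t/(1+t)$.

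Next I decompose $\{X=k\}$ according to which clock realises $T^{\ast}$. Conditioning on $\tau_{0}=t$ and integrating yields $\mathbb{P}(X=k,\,T^{\ast}=\tau_{0}\mid\xi=d)=\binom{d}{k}k!/(d+1)^{k+1}$; a parallel computation on $T^{\ast}=\tau_{j}^{(2)}$, using that the first ring of the winning child-clock automatically converts child $j$ into a spreader, gives $\binom{d-1}{k-1}k!/(d+1)^{k+1}$. Summing over $j\in\{1,\ldots,d\}$ and using $d\binom{d-1}{k-1}=k\binom{d}{k}$ collapses both contributions into $(k+1)!\binom{d}{k}(d+1)^{-(k+1)}$; averaging against $\mathbb{P}(\xi=d)$ then produces \eqref{eq:xxi}.

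For the probability generating function I would multiply the conditional mass function by $s^{k}$, reindex $j=d-k$ in the resulting polynomial, and invoke the truncated-exponential identity $e^{y}\Gamma(n+1,y)=n!\sum_{j=0}^{n}y^{j}/j!$ with $y=(d+1)/s$; this rewrites the polynomial as a linear combination of two upper-incomplete-gamma terms, which after averaging over $\xi$ yields \eqref{eq:gflouca}. For the mean, symmetry gives $\mathbb{E}[X\mid\xi=d]=d\,\mathbb{P}(\tau_{1}^{(1)}<T^{\ast})$, and conditioning on $\tau_{1}^{(1)}=u$ reduces this to
\[\mathbb{E}[X\mid\xi=d]=d\int_{0}^{\infty}(1+u)^{d-1}e^{-(d+1)u}\,du=\frac{d\,e^{d+1}\,\Gamma(d,d+1)}{(d+1)^{d}},\]
via the substitution $t=(d+1)(1+u)$. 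The integration-by-parts recursion $\Gamma(d+1,d+1)=d\,\Gamma(d,d+1)+(d+1)^{d}e^{-(d+1)}$ then lets me re-express $d\,\Gamma(d,d+1)$ through $\Gamma(d+1,d+1)$; the leftover $(d+1)^{d}e^{-(d+1)}$ cancels against the prefactor $e^{d+1}/(d+1)^{d}$, and summing over $d\ge 1$ reproduces exactly the $-\mathbb{P}(\xi\neq 0)$ correction in \eqref{eq:meanxxi}.

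The main obstacle I expect is the bookkeeping in the case analysis that underpins the conditional mass function: one must confirm that a minimum of the form $\tau_{j}^{(2)}$ forces $\tau_{j}^{(1)}<T^{\ast}$, so that child $j$ is automatically counted, while a minimum of the form $\tau_{0}$ leaves the $k$ surviving conversions to occur independently among all $d$ child-clocks, and then verify that the two binomial contributions combine through $d\binom{d-1}{k-1}=k\binom{d}{k}$. Once \eqref{eq:xxi} is secured, the formulas for $G_{\xi}$ and $\mathbb{E}(X(\xi))$ are algebraic rearrangements based on the truncated-exponential identity and the integration-by-parts recursion for the upper-incomplete gamma.
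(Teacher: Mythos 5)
Your proposal is correct. Its skeleton is the same as the paper's --- condition on $\xi=d$, obtain the conditional law, p.g.f.\ and mean of $X$, then average over the law of $\xi$ --- but the way you obtain the conditional quantities is genuinely different. The paper imports the conditional law $\mathbb{P}(X(\xi)=i\mid\xi=d)=(i+1)!\binom{d}{i}(d+1)^{-(i+1)}$ from \cite[Lemma 2]{junior}, whose proof runs through the embedded jump chain (the successive contacts of a spreader are i.i.d.\ uniform over its $d+1$ neighbours, and $\{X=i\}$ means the first $i$ contacts hit distinct ignorant children and the $(i+1)$-st a non-ignorant), and it computes $\mathbb{E}(X\mid\xi=d)$ as the discrete sum $d!\sum_{i=1}^{d}(d+1)^{-i}/(d-i)!$ over the rank at which a fixed child is informed. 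You instead stay in continuous time: decomposing $\{X=k\}$ according to whether $T^{\ast}$ is realised by the parent clock or by the second ring of some child clock gives $\binom{d}{k}k!/(d+1)^{k+1}$ and $d\binom{d-1}{k-1}k!/(d+1)^{k+1}=k\binom{d}{k}k!/(d+1)^{k+1}$, which indeed sum to the stated mass function; and your integral $d\int_{0}^{\infty}(1+u)^{d-1}e^{-(d+1)u}\,du=d\,e^{d+1}\Gamma(d,d+1)/(d+1)^{d}$ combined with the recursion $\Gamma(d+1,d+1)=d\,\Gamma(d,d+1)+(d+1)^{d}e^{-(d+1)}$ reproduces $e^{d+1}\Gamma(d+1,d+1)/(d+1)^{d}-1$ and hence the $-\mathbb{P}(\xi\neq0)$ correction. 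The p.g.f.\ step is the same rearrangement plus truncated-exponential identity in both treatments. What your route buys is a self-contained derivation that makes the Poisson-clock graphical construction explicit (the same structure that underlies the later branching-process comparison), at the price of heavier bookkeeping; what the paper's route buys is brevity and the cleaner combinatorics of the uniform-contact chain. One point you should state explicitly rather than assume: because the graph is a tree and the auxiliary source $s$ is attached to the root, every vertex has exactly one non-ignorant neighbour (its parent) and $d$ ignorant children at the instant it becomes a spreader --- this is what legitimises both your clock construction and the cited lemma.
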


\begin{proof}

The law of $X(\xi)$ is obtained by noting that 
$$\mathbb{P}( X(\xi) = i)=\sum_{d=0}^{\infty}\mathbb{P}( X(\xi) = i|\xi = d)\mathbb{P}(\xi = d),$$
where $\mathbb{P}( X(\xi) = i|\xi = d)=0$ for $d<i$, and  
$$\mathbb{P}( X(\xi) = i|\xi = d)= (i+1)! \dbinom{d}{i} \left (\frac{1}{d+1} \right )^{i+1},$$
for $d\geq i$, see \cite[Lemma 2]{junior}. In order to obtain the expression \eqref{eq:gflouca} for the generating probability function note that:
\[ G_{\xi}(s) = \sum_{i=0}^{\infty}s^i\mathbb{P}( X(\xi) = i) = \sum_{i=0}^{\infty}s^i(i+1)! \sum_{d=i}^{\infty} \dbinom{d}{i} \left (\frac{1}{d+1} \right )^{i+1} \mathbb{P}(\xi = d)  
\]

Then, by a sequence of suitable arrangement of terms we have, 

$$
\begin{array}{ccl}
G_{\xi}(s) & =&\displaystyle \sum_{d=0}^{\infty} \sum_{i=0}^{d}\frac{(i+1)!}{d+1} \dbinom{d}{i} \left (\frac{s}{d+1} \right )^{i} \mathbb{P}(\xi = d)\\[.5cm] 
& = &\displaystyle \sum_{d=0}^{\infty} \frac{d!\mathbb{P}(\xi = d)}{d+1}  \sum_{i=0}^{d} \frac{(i+1)}{(d-i)!} \left (\frac{s}{d+1} \right )^{i}\\[.5cm] 
& =& \displaystyle\sum_{d=0}^{\infty} \frac{d!\mathbb{P}(\xi = d)}{d+1}  \sum_{i=0}^{d} \frac{(i+1)}{(d-i)!} \left (\frac{s}{d+1} \right )^{i}\\[.5cm]   
& =& \displaystyle\sum_{d=0}^{\infty} \frac{d!\mathbb{P}(\xi = d)}{d+1}  \sum_{j=0}^{d} \frac{(d-j+1)}{j!} \left (\frac{s}{d+1} \right )^{d-j}\\[.5cm] 
& =&\displaystyle \sum_{d=0}^{\infty} \frac{d!\mathbb{P}(\xi = d)}{d+1}\left (\frac{s}{d+1} \right )^{d} \left [ (d+1)\sum_{j=0}^{d} \frac{1}{j!}\left (\frac{d+1}{s} \right )^{j} - \sum_{j=1}^{d} \frac{1}{(j-1)!}\left (\frac{d+1}{s} \right )^{j}  \right ]
\\[.5cm]
& =&\displaystyle \sum_{d=0}^{\infty} \frac{d!\mathbb{P}(\xi = d)}{d+1}\left (\frac{s}{d+1} \right )^{d} \left [ (d+1)\sum_{j=0}^{d} \frac{1}{j!}\left (\frac{d+1}{s} \right )^{j} - \frac{(d+1)}{s}\sum_{l=0}^{d-1} \frac{1}{l!}\left (\frac{d+1}{s} \right )^{l}  \right ]
\end{array}.
$$

Now, consider the incomplete gamma function 

$$\Gamma\left(k,x\right):=(k-1)!e^{-x}\displaystyle\sum_{i=0}^{k-1}\frac{x^i}{i!},$$ 

and note that we can rewrite $G_{\xi}(s)$ as follows.

$$\begin{array}{ccl}
G_{\xi}(s) &=&\displaystyle \sum_{d=0}^{\infty} \frac{d!\mathbb{P}(\xi = d)}{d+1}\left (\frac{s}{d+1} \right )^{d} \left [ \frac{(d+1)e^{\frac{d+1}{s}}}{d!}\Gamma\left(d+1, \frac{d+1}{s}\right) - \frac{(d+1)e^{\frac{d+1}{s}}}{s(d-1)!}\Gamma\left(d, \frac{d+1}{s}\right)\right]\\[.5cm]
& = &\displaystyle \sum_{d=0}^{\infty} \frac{e^{\frac{d+1}{s}} d!\mathbb{P}(\xi = d)}{d+1}\left (\frac{s}{d+1} \right )^{d} \left [ \frac{(d+1)}{d!}\Gamma\left(d+1, \frac{d+1}{s}\right) - \frac{d(d+1)}{s}\Gamma\left(d, \frac{d+1}{s}\right) \right ]\\[.5cm]
& =&\displaystyle \sum_{d=0}^{\infty}e^{\frac{d+1}{s}}  \left [ \Gamma \left (d+1,\frac{d+1}{s} \right ) - \frac{d(d+1)}{s}  \Gamma \left (d,\frac{d+1}{s} \right ) \right ] \mathbb{P}( \xi = d) \left (\frac{s}{d+1} \right )^d
 \end{array}.
 $$

The last equality is \eqref{eq:gflouca}. The mean of $X(\xi)$ is obtained by noting that 
$$\mathbb{E}( X(\xi))=\sum_{d=0}^{\infty}\mathbb{E}( X(\xi)|\xi = d)\mathbb{P}(\xi = d).
$$

To find an expression for $\mathbb{E}(X(\xi)|\xi = d)$ let us give an enumeration for the ignorant neighbors of a spreader with $d+1$ neighbors. In order to do it,  let us write $X(\xi)|\xi = d$ as the sum of indicator random variables $I_1, I_2, \ldots I_d$, where $I_j$ indicates whether the $jth$-ignorant neighbor is informed. Thus, 

\[ \mathbb{E}(X(\xi)|\xi = d) = d\, \mathbb{P}(I_1 = 1) = d\, \mathbb{P} \left( \bigcup_{i=1}^{d}A_i \right) = d\,\sum_{i=1}^{d}\mathbb{P}(A_i)
\]
where $A_i$ is the event of the first neighbor be the $ith$ to be informed by the spreader. Note that the last equality follows from the fact that the events $A_1, A_2, \ldots, A_d$ are $2$ to $2$ mutually exclusive. Then,

$$\mathbb{E}(X(\xi)|\xi = d)= d! \sum_{i=1}^{d} \frac{1}{(d-i)!} \left (\frac{1}{d+1} \right )^{i} = \frac{e^{d+1}\Gamma\left(d+1,d+1\right)}{(d+1)^d} - 1,$$
for $d\geq i$. \end{proof}

We can now state the main theorem of our work.

\begin{theorem}\label{theo:geral}
Let $\xi$ be a non-negative integer valued random variable, and let $(\eta_t)_{t\ge0}$ be the MT-model on $\mathbb{T}_{\xi}$ with the standard initial configuration. Then $\theta(\xi) = 1 - \psi(\xi)$, where $\psi:=\psi(\xi)$ is the smallest non-negative root of the equation
\begin{equation}\label{eq:laly}
G_{\xi}(s)=s.
\end{equation}
Moreover $\theta(\xi)>0$ if, and only if, $\mathbb{E}(X(\xi))>1$.
\end{theorem}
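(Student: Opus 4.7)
The plan is to identify the \emph{spreader genealogy} on $\mathbb{T}_\xi$ with a Galton--Watson branching process whose offspring law is that of $X(\xi)$ from Proposition \ref{prop:distxxi}, and then to invoke classical extinction theory. Declare $\mathbf{0}$ to be the generation-zero spreader, and recursively let the generation-$(n{+}1)$ spreaders be those vertices that transition $0\to 1$ via a contact from a generation-$n$ spreader. A ray of spreaders from $\mathbf{0}$ exists if and only if every generation of this genealogy is non-empty, so survival of the rumor in the sense of the earlier definition is equivalent to non-extinction of the genealogy.

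The branching-process structure must be verified next. Fix a newly informed spreader $v$ whose parent $u$ is already non-ignorant at the informing instant, and condition on $v$ having $d$ successors in $\mathbb{T}_\xi$. Because $\mathbb{T}_\xi$ is a tree, the ignorant children of $v$ can only ever be reached through $v$, so the number that $v$ actually informs before being stifled is decided by a purely local race: the $d$ independent rate-one informing clocks between $v$ and its ignorant children against the stifling clock of $v$, whose rate by \eqref{rates} equals the current count of non-ignorant neighbors of $v$ and is unaffected by whether $u$ (or an already-converted child) is in state $1$ or $2$. This is precisely the competition analyzed in \cite[Lemma 2]{junior}, giving $\mathbb{P}(X=i\mid\xi=d)=(i+1)!\binom{d}{i}/(d+1)^{i+1}$, and removing the conditioning on $d$ recovers \eqref{eq:xxi}. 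Applying the strong Markov property at each informing time, together with the fact that the subtrees rooted at distinct children of $v$ are vertex-disjoint and, by the construction of $\mathbb{T}_\xi$, i.i.d.\ copies of the same Galton--Watson tree, yields independence of the reproductions of different spreaders. Iterating, the spreader genealogy is a Galton--Watson tree with offspring law $X(\xi)$ and root $\mathbf{0}$.

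Once this identification is in place, the conclusions follow from classical theory. The probability generating function of $X(\xi)$ is exactly $G_\xi$ in \eqref{eq:gflouca}, so the extinction probability of the genealogy equals the smallest non-negative root $\psi(\xi)$ of $G_\xi(s)=s$, whence $\theta(\xi)=1-\psi(\xi)$. The equivalence $\theta(\xi)>0 \Longleftrightarrow \mathbb{E}(X(\xi))>1$ is then the standard supercriticality dichotomy for Galton--Watson processes; non-degeneracy of $X(\xi)$, which rules out the trivial exception $X(\xi)\equiv 1$, is automatic since $\mathbb{P}(X(\xi)=0)\geq \sum_{d\geq 0}\mathbb{P}(\xi=d)/(d+1)>0$. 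The step I expect to require the most care is the independence claim underlying the branching identification: the stifling rate of $v$ is time-inhomogeneous because it grows as $v$'s own children are successively informed, but this time inhomogeneity is entirely driven by events internal to the subtree rooted at $v$, while the $1\to 2$ transitions happening elsewhere are invisible to $v$'s rate because stiflers and spreaders count identically in $n_1+n_2$. Making this decoupling precise via the strong Markov property, then invoking the i.i.d.\ branching structure of $\mathbb{T}_\xi$, is what reduces the rumor dynamics to a Galton--Watson tree with offspring law $X(\xi)$.
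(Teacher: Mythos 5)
Your proposal is correct and follows essentially the same route as the paper: identify the spreader genealogy with a Galton--Watson process whose offspring law is $X(\xi)$ (the paper's Lemma \ref{lem:bp}) and then apply the classical extinction dichotomy and the fixed-point characterization of the extinction probability. You actually supply more detail than the paper does on why the branching identification is legitimate (the local race argument, the insensitivity of the stifling rate to spreader-versus-stifler status of the parent, and the non-degeneracy check $\mathbb{P}(X(\xi)=0)>0$), all of which is sound.
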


As an application of the above-theorem, whose proof we left to Section \ref{s:proofs}, we consider the MT-model on a random tree produced by random pruning the infinite Cayley tree; i.e., we assume that $\xi$ follows a Binomial distribution of parameters $n$ and $p$.

\begin{corollary}\label{coro:binom}
Let $\xi\sim Binomial(n,p)$, with $n\geq 3$, and let $(\eta_t)_{t\ge0}$ be the MT-model on $\mathbb{T}_{\xi}$ with the standard initial configuration. Then $\theta(n,p) = 1 - \psi$, where $\psi$ is the smallest non-negative root of the equation
\[ \sum_{d=0}^{n} \displaystyle \left[\frac{1}{d+1} \dbinom{n}{d} p^d(1-p)^{n-d} \sum_{i=0}^{d} (i+1)! \dbinom{d}{i} \left(\frac{s}{d+1} \right)^i \right]= s.\]
Moreover $\theta(n,p)>0$ if, and only if,

\begin{equation}\label{eq:corobinom}
    \sum_{d=1}^n \frac{1}{(n-d)!}\left(\frac{p}{1-p}\right)^d\sum_{j=1}^d\frac{1}{(d+1)^j(d-j)!}>\dfrac{1}{(1-p)^n n!}.
\end{equation}
%\begin{align*}
%(1-p)^n n! \sum_{d=1}^n \frac{1}{(n-d)!}\left(\frac{p}{1-p}\right)^d\sum_{j=1}^d\frac{1}{(d+1)^j(d-j)!}&>1\\
%\sum_{d=1}^n \frac{1}{(n-d)!}\left(\frac{p}{1-p}\right)^d\sum_{j=1}^d\frac{1}{(d+1)^j(d-j)!}&>\dfrac{1}{(1-p)^n n!}.\\
%\end{align*}
\end{corollary}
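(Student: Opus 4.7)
The plan is to apply Theorem \ref{theo:geral} to the case $\xi\sim\mathrm{Binomial}(n,p)$ and use the intermediate formulas developed inside the proof of Proposition \ref{prop:distxxi} rather than its final closed-form expressions, since for $\xi$ bounded by $n$ the (incomplete) gamma machinery collapses into finite double sums that are more convenient algebraically.

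First, I would derive the equation for $\psi$. Theorem \ref{theo:geral} tells us that $\theta(n,p)=1-\psi$ where $\psi$ is the smallest non-negative root of $G_{\xi}(s)=s$. I would not start from \eqref{eq:gflouca}; instead, I would revisit the very first step of that proof, which gives
\[
G_{\xi}(s)=\sum_{d=0}^{\infty}\mathbb{P}(\xi=d)\sum_{i=0}^{d}(i+1)!\binom{d}{i}\left(\frac{1}{d+1}\right)^{i+1}s^{i}.
\]
Since $\xi\sim\mathrm{Binomial}(n,p)$ is supported on $\{0,1,\dots,n\}$, the outer sum truncates at $d=n$. Plugging in $\mathbb{P}(\xi=d)=\binom{n}{d}p^{d}(1-p)^{n-d}$ and pulling $\frac{1}{d+1}$ out of the inner sum yields exactly the displayed equation in the statement.

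For the phase-transition criterion, I would invoke the second assertion of Theorem \ref{theo:geral}: $\theta(n,p)>0$ iff $\mathbb{E}(X(\xi))>1$. Again I would avoid the incomplete gamma form \eqref{eq:meanxxi} and use the penultimate line of the proof of Proposition \ref{prop:distxxi}, namely
\[
\mathbb{E}(X(\xi)\mid \xi=d)=d!\sum_{i=1}^{d}\frac{1}{(d-i)!\,(d+1)^{i}}.
\]
Conditioning on $\xi$ and using $\binom{n}{d}d!=n!/(n-d)!$ gives
\[
\mathbb{E}(X(\xi))=\sum_{d=1}^{n}\frac{n!}{(n-d)!}\,p^{d}(1-p)^{n-d}\sum_{i=1}^{d}\frac{1}{(d-i)!\,(d+1)^{i}}.
\]
Dividing the inequality $\mathbb{E}(X(\xi))>1$ through by $n!(1-p)^{n}$, and regrouping $p^{d}(1-p)^{-d}$ into $\left(\tfrac{p}{1-p}\right)^{d}$, produces exactly \eqref{eq:corobinom}.

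There is no real obstacle here; the whole statement is a direct specialization of Theorem \ref{theo:geral} and Proposition \ref{prop:distxxi} to the Binomial law. The only delicate point is cosmetic: one must resist the temptation to substitute into the closed-form generating function \eqref{eq:gflouca} or the closed-form mean \eqref{eq:meanxxi}, since the $\Gamma$ terms would then have to be unwound again to recover the polynomial expressions appearing in the statement. Using the intermediate finite-sum forms from inside the proof of Proposition \ref{prop:distxxi} keeps the computation to a few lines of rearrangement. I would also note explicitly, for the reader, that the hypothesis $n\ge 3$ is consistent with the Cayley-tree benchmark (Theorem \ref{thm:MTpt}): for $n=2$ one would have $\mathbb{T}_{\xi}\subseteq\mathbb{T}_{2}$ by coupling, and survival is impossible.
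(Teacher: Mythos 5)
Your proposal is correct and follows essentially the same route as the paper: both apply Theorem \ref{theo:geral} directly and then substitute the Binomial law into the intermediate finite-sum forms of $G_{\xi}(s)$ and $\mathbb{E}(X(\xi)\mid\xi=d)$ taken from inside the proof of Proposition \ref{prop:distxxi}, bypassing the incomplete-gamma closed forms, and the remaining steps are the same rearrangement via $\binom{n}{d}d!=n!/(n-d)!$ and division by $n!(1-p)^n$. The closing remark about $n\ge 3$ and the coupling with $\mathbb{T}_2$ is a harmless addition not present in the paper's proof.
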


\begin{proof} Let $ \xi \sim Binomial(n,p) $. Proposition \ref{prop:distxxi} give us an expression for the generating probability function $G_{\xi}(s)$ using the incomplete Gamma function. Since here we are considering a simple law for $\xi$ it is enough to note that

$$G_{\xi}(s)=\displaystyle\sum_{d=0}^{\infty} \frac{d!\mathbb{P}(\xi = d)}{d+1}  \sum_{i=0}^{d} \frac{(i+1)}{(d-i)!} \left (\frac{s}{d+1} \right )^{i},$$

\noindent
so we can obtain a version of \eqref{eq:laly} only by replacing the law of $\xi$ in the previous expression. Now, by Theorem \ref{theo:geral}, we have $\theta(n,p)>0$ if, and only if, $\mathbb{E}(X(\xi))>1.$ Note that 
\begin{align}\label{eq:esp}
\mathbb{E}(X(\xi))&=\sum_{d=1}^n \mathbb{E}(X|\xi=d)\mathbb{P}(\xi=d)\nonumber\\
&= \sum_{d=1}^n d!\sum_{j=1}^d\frac{1}{(d+1)^j(d-j)!}\binom{n}{d}p^d(1-p)^{n-p}\nonumber\\
&=\sum_{d=1}^n d!\binom{n}{d}p^d(1-p)^{n-d}\sum_{j=1}^d\frac{1}{(d+1)^j(d-j)!}\nonumber\\
&=(1-p)^n \sum_{d=1}^n \frac{d!n!}{d!(n-d)!}\left(\frac{p}{1-p}\right)^d\sum_{j=1}^d\frac{1}{(d+1)^j(d-j)!}\nonumber\\
&=(1-p)^n n! \sum_{d=1}^n \frac{1}{(n-d)!}\left(\frac{p}{1-p}\right)^d\sum_{j=1}^d\frac{1}{(d+1)^j(d-j)!}.\nonumber\\
\end{align}
Then  $\theta(n,p)>0$ if, and only if,
$$
(1-p)^n n! \sum_{d=1}^n \frac{1}{(n-d)!}\left(\frac{p}{1-p}\right)^d\sum_{j=1}^d\frac{1}{(d+1)^j(d-j)!}>1
$$

%\sum_{d=1}^n \frac{1}{(n-d)!}\left(\frac{p}{1-p}\right)^d\sum_{j=1}^d\frac{1}{(d+1)^j(d-j)!}&>\dfrac{1}{(1-p)^n n! }.
\noindent
so we get \eqref{eq:corobinom} and this complete the proof.
\end{proof}

Corollary \ref{coro:binom} gains in interest if we realize that it give us a way to find the value of $p$ for which the behavior of the model moves from extinction to survival of the rumor provided $n$ is fixed. Moreover, it is not difficult to see that we recover Theorem \ref{thm:MTpt} by letting $n=d$ and $p=1$. Now, let us consider an example to illustrate what happens for a tree for which $\mathbb{E}(\xi)$ is between $2$ and $3$.

\begin{exa}\label{exa:binom3p}Let $\xi\sim Binomial(3,p)$ and consider the MT-model on $\mathbb{T}_{\xi}$ with the standard initial configuration. Our interest is for $p\in(2/3,1)$; that is $\mathbb{E}(\xi)\in (2,3)$. Indeed, if $p<2/3$ we known that the rumor becomes extinct almost surely (by comparison with Theorem \ref{thm:MTpt} for $d=2$). On the other hand, for $p=1$ the rumor survives with positive probability (see Theorem \ref{thm:MTpt} for $d=3$). By Corollary \ref{coro:binom} we have $\theta(3,p) = 1 - \psi$ where $\psi$ is the smallest non-negative root of the equation
\[ (1-p)^3 +\frac{3}{2}(p-1)^2 p(s+1) - \frac{1}{3}(p-1)p^2(2s^2+4s+3) + \frac{1}{32} p^3(3s^3 +9s^2 +12s +8) = s.
\]
%\[ \frac{3p^3}{32} s^3 + \displaystyle \left ( \frac{2p^2}{3} - \frac{37p^3}{96} \right )s^2 + \left (  \frac{13p^3}{24} - \frac{5p^2}{3}+\frac{3p}{2} -1 \right )s + \left ( \frac{-p^3}{4} +p^2 -\frac{3p}{2} +1 \right ) = 0.
%\]
Thus $\theta(3,p) > 0$ if, and only if, $5p^3 -32p^2 +144p -96 >0$. This in turns implies that
\begin{equation}
\theta(3,p)=\left\{
\begin{array}{cl}
 \displaystyle\frac{-5p^2 +32 p -2\sqrt{p(-5p^3 -8p^2 -68p+216)}}{9p^2},&\text{ if }p > p_c,\\[.5cm]
 0,&\text{ if }p \leq p_c,
\end{array}\right.
\end{equation}
where
$$ p_c := \frac{2}{15} \left(16 - (142 \times 2^{\frac{2}{3}})/(45 \sqrt{5689} - 2407)^{\frac{1}{3}} + (2 (45 \sqrt{5689} - 2407))^{\frac{1}{3}} \right)\approx0.78753.
$$
Moreover, note that $\max_{p \in [0,1]} \theta(3,p) = 3- \frac{2}{3}\sqrt{15} \approx 0.418.$ See Figure \ref{fig:ptbinom}.
\end{exa}

\bigskip
	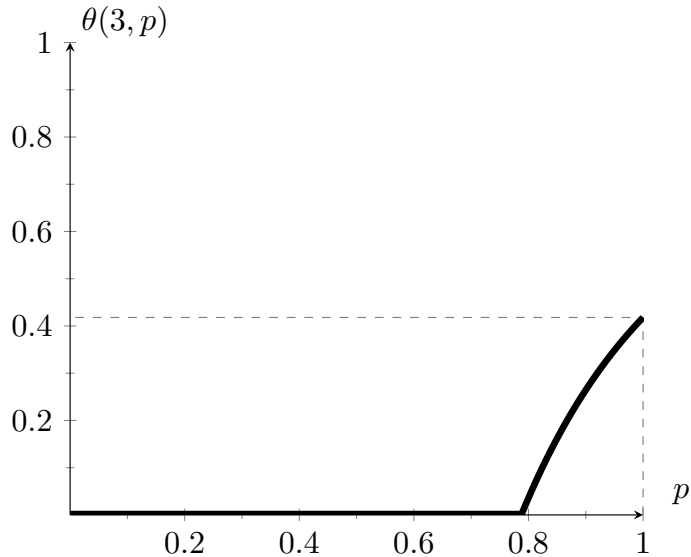
\begin{figure}[!htb]
	\pgfplotsset{my style/.append style={axis x line=middle, axis y line=
middle, xlabel={$p$}, ylabel={$\theta(3,p)$}}}
	\begin{tikzpicture}[scale=1.1]
\begin{axis}[my style,
xmin=0, xmax=1, ymin=0, ymax=1, minor tick num=1]
% \addplot [white, fill=blue, fill opacity=0.05]coordinates {
 %           (0, 0) 
  %          (0.75, 0)
   %         (0.75, 1) (0, 1)  };
    %         \addplot [white, fill=brown, fill opacity=0.1]coordinates {
     %       (0.75, 0)
      %      (0.75, 1) (1, 1) (1,0)  };
 \addplot +[mark=none,gray, dashed] coordinates {(1,0) (1, 0.418)(0,0.418)};
\addplot[domain=0:0.78753, line width=1mm] {0};
\addplot[domain=0.78753:1, line width=0.8mm] {(-5*x^2+32*x-2*sqrt(x*(-5*x^3-8*x^2-68*x+216)))/(9*x^2)};
\end{axis}
\end{tikzpicture}

		\caption{Survival probability for the MT-model on $\mathbb{T}_{\xi}$ with $\xi\sim Binomial(3,p)$ and the standard initial configuration. For $p\leq 2/3$ we known that $\theta(3,p)=0$ as a consequence of the same behavior for the infinite Cayley tree of coordination number $3$, see Theorem \ref{thm:MTpt} for $d=2$, and a coupling argument. Corollary \ref{coro:binom} allows to understand the behavior of the model for $p>2/3$ localizing the critical parameter in $\approx0.78753.$}\label{fig:ptbinom}
	\end{figure}
	
	\subsection{Further examples and properties}

Of course, as a consequence of Theorem \ref{theo:geral}, different distributions give rise to different versions of Corollary \ref{coro:binom}. However, the exact localization of the critical parameter is a task that can be difficult depending of the considered law. In those cases one of the usual strategies is the computation of non-trivial bounds for such a value. Let us illustrate it with some examples. 

\begin{exa}\label{exa:poisson}
Let $\xi\sim Poisson(\lambda)$, with $\lambda > 1$, and let $(\eta_t)_{t\ge0}$ be the MT-model on $\mathbb{T}_{\xi}$ with the standard initial configuration. A direct application of Theorem \ref{theo:geral} allow us to conclude that there exist a critical value $\lambda_c \in (2.49782; 2.49785)$ such that $\theta(\lambda)>0$ if, and only if, $\lambda > \lambda_c$. Indeed, note that Equation \eqref{eq:meanxxi} from Proposition \ref{prop:distxxi} leads to
$$
g(\lambda) :=\mathbb{E}(X(\xi))= e^{-\lambda +1}\sum_{d=1}^{\infty} \left [ \frac{(\lambda e)^d \Gamma (d+1,d+1)}{(d+1)^d d!} \right] +e^{-\lambda}-1.
$$
On the one hand we have
\begin{equation}\label{eq:poisson1}
g(\lambda) < e^{-\lambda}-1  + e^{-\lambda +1}\sum_{d=1}^{17} \left [ \frac{(\lambda e)^d \Gamma (d+1,d+1)}{(d+1)^d d!} \right] + e^{-\lambda +1}\sum_{d=18}^{\infty} \left [ \frac{(\lambda e)^d}{(d+1)^d} \right],
\end{equation}
while, on the other hand we obtain
\begin{equation}\label{eq:poisson2}
g(\lambda) >  e^{-\lambda}-1  + e^{-\lambda +1}\sum_{d=1}^{17} \left [ \frac{(\lambda e)^d \Gamma (d+1,d+1)}{(d+1)^d d!} \right].
\end{equation}
Finally, by taking $\lambda=2.49782$ in \eqref{eq:poisson1} the right side is equal to
$$e^{-2.49782}-1  + e^{-1.49782}\sum_{d=1}^{17} \left [ \frac{(2.49782 e)^d \Gamma (d+1,d+1)}{(d+1)^d d!} \right] + e^{-1.49782}\sum_{d=18}^{\infty} \left [ \frac{(2.49782e)^d}{(d+1)^d} \right]
$$
so $g(2.49782) < 0.999997 + 2.332.10^{-9} < 1.$ Similarly, by taking $\lambda = 2.49785$ in \eqref{eq:poisson2} we get
$$  g(2.49785) >  e^{-2.49785}-1  + e^{-2.49785}\sum_{d=1}^{17} \left [ \frac{(2.49785 e)^d \Gamma (d+1,d+1)}{(d+1)^d d!} \right] \geq 1.00001 > 1.
$$
Therefore we conclude the result from Theorem \ref{theo:geral}.
\end{exa}

\begin{exa}\label{exa:geom}
Let $\xi\sim Geom(p)$, and let $(\eta_t)_{t\ge0}$ be the MT-model on $\mathbb{T}_{\xi}$ with the standard initial configuration. Then, there exist a critical value $p_c\in (0.25894,  0.25895)$ such that $\theta(p)>0$ if, and only if, $p < p_c$. In this case, Equation \eqref{eq:meanxxi} from Proposition \ref{prop:distxxi} leads to
\begin{equation*}
g(p)=ep\sum_{d=1}^\infty \Big(\frac{(1-p)e}{d+1}\Big)^d\Gamma(d+1,d+1)-(1-p).
\end{equation*}
On the one hand we have
\begin{equation}\label{eq:geom1}
g(p) > ep\sum_{d=1}^{300} \Big(\frac{(1-p)e}{d+1}\Big)^d\Gamma(d+1,d+1)-(1-p).
\end{equation}
while, on the other hand we obtain
\begin{equation}\label{eq:geom2}
g(p) < ep\sum_{d=1}^{300} \Big(\frac{(1-p)e}{d+1}\Big)^d\Gamma(d+1,d+1) + ep\sum_{d=301}^{\infty} \left [ \left ( \frac{(1-p)e}{2.2} \right )^d  \right ]-(1-p).
\end{equation}
which follows from the following sequence of inequalities:
$$
\begin{array}{ccl}
\displaystyle\sum_{d=301}^{\infty} \left(\frac{(1-p)e}{d+1}\right)^d\Gamma(d+1,d+1)& \leq&\displaystyle \displaystyle\sum_{d=301}^{\infty} \left(\frac{(1-p)e}{d+1}\right)^d d! \\[.5cm]
&\leq& \displaystyle\sum_{d=301}^{\infty} \left(\frac{(1-p)ed}{2.2(d+1)}\right)^d \frac{d!}{(\frac{d}{2.2})^d}\\[.5cm]
&\leq&\displaystyle \sum_{d=301}^{\infty} \left(\frac{(1-p)ed}{2.2(d+1)}\right)^d\\[.5cm]  &\leq& \displaystyle\sum_{d=301}^{\infty} \left(\frac{(1-p)e}{2.2}\right)^d
\end{array}
$$
By taking $p=0.25894$ in \eqref{eq:geom1} we obtain
$$ g(0.25894)  > 0.25894 e \sum_{d=301}^{\infty} \left(\frac{0.74106ed}{2.2(d+1)}\right)^d \Gamma(d+1,d+1) - 0.74106 = 1.00003 > 1.
$$
By taking $p=0.25895$ in \eqref{eq:geom2} we have
$$g(0.25895) < 0.25895 e \sum_{d=1}^{300} \left(\frac{0.74105e}{d+1}\right)^d\Gamma(d+1,d+1) + \sum_{d=301}^{\infty} \left [ \left ( \frac{0.74105e}{2.2} \right )^d  \right ]-0.74105
$$
and this in turns implies
$$
g(0.25895) < 0.999993 + 2.50463 \times 10^{-11} < 1.
$$
Thus, the result is a consequence of Theorem \ref{theo:geral}.

\end{exa}

%\begin{tabular}{|c|c|c|}
% $\mu_n > 1$ se  &  $n p_c$\\ \hline
%   3 &  $p > 0.78753 $ & 2.362591  \\ \hline
%      4 & $p > 0.599322 $ & 2.397288   \\ \hline
%        5  & $p > 0.483563 $ & 2.417815  \\ \hline
%           6  & $p > 0.40523 $  & 2.43138   \\ \hline
%  \end{tabular} 

In what follows we discuss an alternative strategy when a direct application of Theorem \ref{theo:geral}, to a given law of $\xi$, is not possible.

\begin{prop}\label{prop:mu}
Let $(\xi_n)_{n\in\mathbb{N}}$ and $\xi$ be non-negative integer valued random variables, and assume that $\lim_{n\to \infty} \xi_n =\xi$, in law. Let $(\eta^n_t)_{t\ge0}$ and $(\eta_t)_{t\ge0}$ be MT-models on $\mathbb{T}_{\xi_n}$ and $\mathbb{T}_{\xi}$, respectively, with the standard initial configuration. If $\mu_n$ and $\mu$ denote the expected value of the number of spreaders one spreader generates for each one of these models, then $\lim_{n \to \infty} \mu_n >1$ if, and only if, $\mu>1$.
\end{prop}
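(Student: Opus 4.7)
My plan is to show that the mapping $\xi \mapsto \mu_\xi$ is continuous under convergence in law of non-negative-integer-valued random variables; once $\lim_n \mu_n = \mu$ is established, the stated equivalence is immediate.

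Starting from formula \eqref{eq:meanxxi} in Proposition \ref{prop:distxxi}, I would write both means as series in the probability mass functions:
\[
\mu_\xi \;=\; \sum_{d=0}^{\infty} a(d)\,\mathbb{P}(\xi = d), \qquad a(d) \;:=\; \frac{e^{d+1}\,\Gamma(d+1,d+1)}{(d+1)^d} - \mathbf{1}_{\{d\neq 0\}},
\]
and similarly for $\mu_n$ in terms of $\mathbb{P}(\xi_n = d)$. The key elementary observation is that $a(d)$ is exactly $\mathbb{E}(X(\xi)\mid \xi = d)$, which yields the simple a priori bound $0 \leq a(d) \leq d$: a spreader with $d$ non-parent neighbors can inform at most $d$ of them.

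Since $\xi_n$ and $\xi$ take values in $\mathbb{N}\cup\{0\}$, the hypothesis $\xi_n \to \xi$ in law reduces to the pointwise convergence of mass functions, $\mathbb{P}(\xi_n = d) \to \mathbb{P}(\xi = d)$ for every $d \geq 0$. Applying Fatou's lemma term by term to the series above (whose summands are non-negative) at once gives $\mu \leq \liminf_n \mu_n$, which settles the direction ``$\mu > 1 \Rightarrow \lim_n \mu_n > 1$''. For the reverse direction I would invoke uniform integrability of the sequence $(a(\xi_n))_n$: combined with the pointwise convergence, Vitali's theorem then gives $\mu_n \to \mu$, closing the equivalence. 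Uniform integrability follows from the bound $a(d) \leq d$ together with the tail control on $(\xi_n)$ implicit in the hypothesis (typically, convergence of first moments $\mathbb{E}(\xi_n) \to \mathbb{E}(\xi) < \infty$, which via $a(d)\leq d$ transfers uniform integrability from $(\xi_n)$ to $(a(\xi_n))$).

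The main obstacle is this last uniform integrability step, since $a(d)$ is unbounded; in fact Stirling's formula applied to the incomplete gamma shows that $a(d)$ grows like $\sqrt{d}$, so a naive bounded-convergence argument is not available and some tail control is genuinely needed. Everything else is a direct manipulation of the explicit formula from Proposition \ref{prop:distxxi} combined with standard convergence tools.
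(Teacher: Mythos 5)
Your route is essentially the paper's: both reduce the claim to proving $\lim_{n}\mu_n=\mu$ by writing $\mu_\xi=\sum_{d}a(d)\,\mathbb{P}(\xi=d)$ with $a(d)=\mathbb{E}(X(\xi)\mid\xi=d)$ from \eqref{eq:meanxxi} of Proposition \ref{prop:distxxi} and passing the limit inside the sum. The difference lies in how the interchange is justified. The paper simply cites the Dominated Convergence Theorem without exhibiting a dominating summable sequence; you instead split the equivalence, obtaining $\mu\le\liminf_n\mu_n$ from Fatou (which indeed settles the direction $\mu>1\Rightarrow\lim_n\mu_n>1$) and demanding uniform integrability of $(a(\xi_n))_n$ for the converse. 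Your diagnosis of the crux is correct and sharper than the paper's own treatment: since $a(d)=Q(d+1)\sim\sqrt{\pi d/2}$ is unbounded, convergence in law alone supplies neither a dominating sequence nor the ``tail control implicit in the hypothesis'' that you invoke. Concretely, take $\xi\equiv 1$ and let $\xi_n$ equal $1$ with probability $1-n^{-1/2}$ and equal $n$ with probability $n^{-1/2}$; then $\xi_n\to\xi$ in law, yet $\mu_n\to\tfrac12+\sqrt{\pi/2}>1$ while $\mu=a(1)=\tfrac12<1$. So the uniform-integrability step is a genuine gap, but it is a gap in the proposition as stated and in the paper's bare appeal to dominated convergence, not a defect peculiar to your argument; under an added hypothesis such as $\mathbb{E}(\xi_n)\to\mathbb{E}(\xi)<\infty$, your transfer of uniform integrability from $(\xi_n)$ to $(a(\xi_n))$ via $0\le a(d)\le d$, followed by Vitali's theorem, closes the proof exactly as you describe.
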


\begin{proof}
Since $\mu_n:=\mathbb{E}(X(\xi_n))$ and $\mu:=\mathbb{E}(X(\xi))$, then by \eqref{eq:meanxxi} we have
$$\mu_{n} = \sum_{d=1}^{\infty} \left [ \frac{e^{d+1} \Gamma (d+1,d+1)}{(d+1)^d}  \mathbb{P}(\xi_n = d) \right] - \mathbb{P}(\xi_n \neq 0)$$
and
$$\mu = \sum_{d=1}^{\infty} \left [ \frac{e^{d+1} \Gamma (d+1,d+1)}{(d+1)^d}  \mathbb{P}(\xi = d) \right ]- \mathbb{P}(\xi \neq 0).$$

\noindent
But
$$
\begin{array}{ccl}
\displaystyle \lim_{n \to \infty}\mu_{n} &=&\displaystyle \lim_{n \to \infty} \left \{ \sum_{d=1}^{\infty} \left [ \frac{e^{d+1} \Gamma (d+1,d+1)}{(d+1)^d}  \mathbb{P}(\xi_n = d) \right ] - \mathbb{P}(\xi_n \neq 0) \right \}\\[.5cm] 
&=&\displaystyle \sum_{d=1}^{\infty} \left [ \frac{e^{d+1} \Gamma (d+1,d+1)}{(d+1)^d}  \mathbb{P}(\xi = d) \right ] - \mathbb{P}(\xi \neq 0)\\[.5cm] 
&=& \mu.
\end{array}
$$

We point out that the interchange of limit and summation is guarantee by the Dominated Convergence Theorem, see \cite[Theorem 9,1, p. 26]{thorison}.

\end{proof}

\begin{exa}\label{cor:binompoi}
Let $\xi_n\sim Binomial(n,p)$, for any $n\geq 3$, and let $(\eta_t)_{t\ge0}$ be the MT-model on $\mathbb{T}_{\xi}$ with the standard initial configuration. Consider for any $n\geq 3$ the critical parameter:   
\begin{equation}
    p_c (n) = \inf\{p \in [0,1]: \theta(n,p) > 0\}.
\end{equation}
If we assume $\lim_{n\to \infty}np=\lambda$ and we let $\xi\sim Poisson(\lambda)$, then Proposition \ref{prop:mu} applies and therefore $\lim_{n \to \infty} n p_c(n) \approx 2.4978$, where the limit is obtained from Example \ref{exa:poisson}. See Table \ref{tab:my_label}.

\end{exa}

\begin{table}[h!]
    \centering
\begin{tabular}{c|c|c|c|c|c|c|c|c}

  $n$ & $3$ & $4$ & $5$ & $10$ & $25$ & $50$ & $100$ & $150$\\\hline
  $p_c(n)\approx$ & $ 0.78753$ & $0.599322 $ & $0.483563$  & $0.24582$ & $0.09928$  & $0.04979$ & $0.02495$ & $0.01663$ \\\hline
  $np_c(n)\approx$ & $2.362591$ & $2.397288$ & $2.417815$ & $2.4582$ & $2.48200$ & $2.48950$ & $2.49390$ & $2.49510$
  \end{tabular} 
    \caption{Illustration of the behavior of the critical parameter of Example \ref{cor:binompoi}.}
    \label{tab:my_label}
\end{table}

We finish the section with a similar analysis for the survival probabilities.

\begin{theorem}\label{teo:lim}
Let $(\xi_n)_{n\in\mathbb{N}}$ be a non-decreasing sequence of non-negative integer valued random variables of vectors parameters given by $(\nu_n)_{n\in\mathbb{N}}$. Let $\xi$ be another non-negative integer valued random variable, of vector parameters $\nu$, and assume that 
$$
\lim_{n\to \infty} \xi_n =\xi,\text{ in law}.
$$
Let $(\eta^n_t)_{t\ge0}$ and $(\eta_t)_{t\ge0}$ be MT-models on $\mathbb{T}_{\xi_n}$, for $n\in\mathbb{N}$, and $\mathbb{T}_{\xi}$, with the standard initial configuration and survival probabilities given by $\theta(\nu_n)$ and $\theta(\nu)$, respectively. Then 
\begin{equation}
    \lim_{n \to \infty} \theta(\nu_n) = \theta(\nu).
\end{equation}
\end{theorem}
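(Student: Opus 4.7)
By Theorem~\ref{theo:geral}, $\theta(\nu_n)=1-\psi_n$ and $\theta(\nu)=1-\psi$, where $\psi_n$ and $\psi$ denote the smallest non-negative roots of $G_{\xi_n}(s)=s$ and $G_\xi(s)=s$, respectively. The whole argument therefore reduces to proving $\psi_n\to\psi$, which I would attack in two stages: first upgrade $\xi_n\Rightarrow\xi$ to pointwise (in fact uniform) convergence $G_{\xi_n}\to G_\xi$ on $[0,1]$, and then transfer this to convergence of the smallest fixed points by a standard bracketing argument.

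For the first stage, since $\xi_n$ and $\xi$ are $\mathbb{N}\cup\{0\}$-valued, convergence in law is equivalent to $\mathbb{P}(\xi_n=d)\to\mathbb{P}(\xi=d)$ for every $d\geq 0$, and Scheff\'e's lemma upgrades this to convergence in total variation. Using the identity from Proposition~\ref{prop:distxxi},
\[
\mathbb{P}(X(\xi_n)=i)=\sum_{d\geq i}c_{i,d}\,\mathbb{P}(\xi_n=d), \qquad c_{i,d}:=(i+1)!\dbinom{d}{i}\Big(\tfrac{1}{d+1}\Big)^{i+1}\leq 1,
\]
and noting that the coefficients $c_{i,d}$ are bounded by $1$ and independent of $n$, total variation convergence of $\xi_n$ immediately yields $\mathbb{P}(X(\xi_n)=i)\to\mathbb{P}(X(\xi)=i)$ for each $i\geq 0$. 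Both $X(\xi_n)$ and $X(\xi)$ are proper laws on $\mathbb{N}\cup\{0\}$ (since $G(1)=1$), so a second application of Scheff\'e yields $G_{\xi_n}\to G_\xi$ uniformly on $[0,1]$.

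For the second stage, every $G_{\xi_n}$ and $G_\xi$ is continuous, convex, non-decreasing on $[0,1]$, with $G(1)=1$ and $G_\xi(0)=\sum_d(d+1)^{-1}\mathbb{P}(\xi=d)>0$. When $\psi<1$, convexity together with $G_\xi(\psi)-\psi=G_\xi(1)-1=0$ and non-degeneracy of $G_\xi$ yields the strict inequality $G_\xi(s)<s$ on $(\psi,1)$, and minimality of $\psi$ together with $G_\xi(0)>0$ yields $G_\xi(s)>s$ on $[0,\psi)$. Fixing $\varepsilon>0$ small enough that $\psi\pm\varepsilon\in[0,1]$, pointwise convergence then gives $G_{\xi_n}(\psi+\varepsilon)<\psi+\varepsilon$ and $G_{\xi_n}(\psi-\varepsilon)>\psi-\varepsilon$ for all $n$ large, which by the Intermediate Value Theorem forces $\psi-\varepsilon<\psi_n\leq\psi+\varepsilon$; letting $\varepsilon\downarrow 0$ yields $\psi_n\to\psi$. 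The case $\psi=1$ requires only the lower-bound half of this bracketing, as $\psi_n\leq 1$ is automatic.

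The main technical point I expect to require care is the strict inequality $G_\xi(s)<s$ on $(\psi,1)$: it rules out the pathological possibility that $G_\xi-\mathrm{id}$ vanishes identically on a subinterval, which would make the bracketing argument collapse. Strict convexity of a non-degenerate probability generating function suffices to handle this, and non-degeneracy follows from the fact that $\mathbb{P}(X(\xi)=1)<1$ (which a direct computation on \eqref{eq:xxi} confirms whenever $\mathbb{P}(\xi\geq 1)>0$). Note that the monotonicity hypothesis on $(\xi_n)$ is not actually needed in the argument as phrased; it would only come into play if one preferred a coupling-based proof embedding the trees $\mathbb{T}_{\xi_n}$ into one another.
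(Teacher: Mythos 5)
Your proposal is correct, but it takes a genuinely different route from the paper. The paper does not argue through the fixed-point equation at all: it converts $\xi_n\Rightarrow\xi$ into pointwise convergence of the offspring laws $\mathbb{P}(X(\xi_n)=i)\to\mathbb{P}(X(\xi)=i)$ (via dominated convergence, much as you do), then establishes that $\mathbb{P}(X(\xi)\le x\mid \xi=d)$ is non-increasing in $d$ so that the \emph{non-decreasing} hypothesis on $(\xi_n)$ yields stochastic domination of $X(\xi_n)$ by $X(\xi)$, and finally invokes an external continuity result for survival probabilities of branching processes (Proposition 4.2 of Machado, Rold\'an-Correa and Junior \cite{FAV}) that requires exactly this combination of convergence in law plus monotone domination. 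Your argument replaces that citation with a self-contained analytic step: Scheff\'e upgrades the convergence to uniform convergence of the p.g.f.'s on $[0,1]$, and convexity plus the standard sign structure of $G_\xi-\mathrm{id}$ gives convergence of the smallest fixed points. The trade-off is clear: the paper's proof is shorter but leans on monotonicity and an outside proposition; yours is longer but elementary, and your closing observation that the monotonicity hypothesis is dispensable is a genuine strengthening of the statement that the paper's own method cannot deliver. One small point to tighten: in the lower-bound half of the bracketing, the single pointwise inequality $G_{\xi_n}(\psi-\varepsilon)>\psi-\varepsilon$ does not by itself exclude a fixed point of $G_{\xi_n}$ somewhere in $[0,\psi-\varepsilon)$; you should either use the uniform convergence you already established to get $G_{\xi_n}-\mathrm{id}>0$ on all of the compact interval $[0,\psi-\varepsilon]$, or appeal to the fact that a non-degenerate p.g.f.\ has at most one fixed point in $[0,1)$, so the fixed point produced by the Intermediate Value Theorem inside $(\psi-\varepsilon,\psi+\varepsilon)$ is necessarily the smallest one. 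Either patch is immediate with the tools you have in hand.
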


\begin{proof}
Let $x$ a non-negative integer and note that

$$\mathbb{P}( X(\xi) \leq x | \xi = d) = 1 -\prod_{i=1}^{x} \left( 1- \frac{i}{d}\right) \geq 1 -\prod_{i=1}^{x} \left( 1- \frac{i}{d+1}\right)
 = \mathbb{P}( X(\xi) \leq x | \xi = d+1). 
$$
In general, for any $x$, we can prove by induction that $d_1 < d_2$ implies
$$\mathbb{P}( X \leq x | \xi = d_1)  \geq \mathbb{P}( X \leq x | \xi = d_2)$$
On the other hand, if $\xi_n$ converges in law to $\xi$, then the 
Dominated Convergence Theorem, see \cite[Theorem 9,1, p. 26]{thorison}, allows the interchange of limit and summation, and so:

$$
\begin{array}{ccl}
\displaystyle\lim_{ n \to \infty} \mathbb{P}(X(\xi_n) = i) &=& \displaystyle\lim_{ n \to \infty} (i+1)! \sum_{d=i}^{\infty} \dbinom{d}{i} \left (\frac{1}{d+1} \right )^i \mathbb{P}(\xi_n = d)\\[.5cm]
&=& (i+1)! \displaystyle\sum_{d=i}^{\infty} \dbinom{d}{i} \left (\frac{1}{d+1} \right )^i \displaystyle\lim_{n \to \infty} \mathbb{P}(\xi_n = d)\\[.5cm] 
& =& (i+1)! \displaystyle\sum_{d=i}^{\infty} \dbinom{d}{i} \left (\frac{1}{d+1} \right )^i \mathbb{P}(\xi = d)\\[.5cm] 
&=& \mathbb{P}(X(\xi)= i).
\end{array}
$$
That is, $X(\xi_n) \to X(\xi)$ in law. Finally, it is not difficult to see that $X(\xi_n)$ is stochastically dominated by $X(\xi)$ provided $\xi_n$ be stochastically dominated by $\xi$. See \cite[Equation (3.1), page 4]{thorison} for more details. Therefore, by applying \cite[Proposição 4.2]{FAV} we get 
$$ \lim_{n \to \infty} \theta_n (\nu) = \theta (\nu).
$$

\end{proof}

\subsection{The range of the rumor on a random tree}

As mentioned in the Introduction, one way to measure the ``size'' of the rumor on a finite graph is through the analysis of the remaining proportion of ignorants at the end of the process. In the case of a tree we do it through the notion of range of the rumor, which is the longest distance, from the root, at which there is a non-ignorant individual.    
\begin{theorem}\label{theo:range}
Let $\xi$ be a non-negative integer valued random variable, and let $(\eta_t)_{t\ge0}$ be the MT-model on $\mathbb{T}_{\xi}$ with the standard initial configuration. Consider $X(\xi)$, and let $\mu_{\xi}$ and $G_{\xi}$ be its mean and its p.g.f, respectively, as defined by \eqref{eq:xxi}. Let 
\begin{equation}\label{eq:range} 
R(\xi):=\max\{n\geq 1: \eta_{t}(x)=1 \text{ for some }x\in \partial \mathbb{T}_{\xi,n}, \text{ and } t\in \mathbb{R}^+\},
\end{equation} 
be the range of the rumor. If $\mu_{\xi} G_{\xi}''(s) - G_{\xi}'(s)G_{\xi}''(1) \geq 0$, then 

\begin{equation}\label{eq:relouca} \frac{ (\ell-1) \mu_{\xi}^{m+1}}{\ell - \mu_{\xi}^{m+1}} \leq \mathbb{P}(R(\xi) > m) \leq \frac{ (u-1) \mu_{\xi}^{m+1}}{u - \mu_{\xi}^{m+1}},\end{equation}

\noindent
for any $m\geq 0$, where 

$$u := \left\{\mu_{\xi} \,\mathbb{E} \left(\frac{1}{\xi+1} \right)\right\}\left\{\mu_{\xi} + \mathbb{E} \left(\frac{1}{\xi+1} \right)  - 1\right\}^{-1},$$

\noindent
and

$$\ell := 1 - \left\{\frac{2}{G_{\xi}''(1)}\right\}\,\mu_{\xi}\, (\mu_{\xi}-1).$$

Moreover,

\begin{equation}\label{eq:relouca2}
    (u-1) \sum_{m=0}^{\infty} \frac{\mu_{\xi}^{m+1}}{u - \mu_{\xi}^{m+1}} \leq \mathbb{E}(R(\xi)) \leq (\ell-1) \sum_{m=0}^{\infty} \frac{\mu_{\xi}^{m+1}}{\ell - \mu_{\xi}^{m+1}}.
\end{equation}
 \end{theorem}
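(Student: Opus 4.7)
The plan is to exploit the branching-process structure of the spreader generations. A vertex at distance $n \geq 1$ from the root can become a spreader only after its parent does, and conditional on the vertex degrees, the numbers of further spreaders produced by distinct spreaders are independent copies of $X(\xi)$; thus $Z_n := \#\{v \in \partial \mathbb{T}_{\xi,n} : \eta_t(v) = 1 \text{ for some } t \geq 0\}$ is a Galton--Watson process with $Z_0 = 1$ and offspring distribution $X(\xi)$. Consequently
\begin{equation*}
\mathbb{P}(R(\xi) > m) = \mathbb{P}(Z_{m+1} > 0) =: p_{m+1},
\end{equation*}
and $q_n := 1 - p_n$ satisfies the standard iteration $q_{n+1} = G_\xi(q_n)$ with $q_0 = 0$.

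To turn this nonlinear recursion into controlled linear ones I would introduce
\begin{equation*}
h(s) := \frac{1}{1-s} - \frac{\mu_\xi}{1 - G_\xi(s)}, \qquad s \in [0,1],
\end{equation*}
with $h(1) := -G_\xi''(1)/(2\mu_\xi)$ by L'H\^opital. Substituting $s = q_n$ and using $1 - G_\xi(q_n) = p_{n+1}$, a direct rearrangement yields the key identity
\begin{equation*}
\frac{1}{p_{n+1}} = \frac{1}{\mu_\xi\, p_n} - \frac{h(q_n)}{\mu_\xi}.
\end{equation*}
If $h(q_n)$ can be trapped uniformly between the constants $h(0)$ and $h(1)$, then $w_n := 1/p_n$ is sandwiched between two affine recurrences $w_{n+1} = w_n/\mu_\xi + C_*$ with $w_0 = 1$, whose explicit solutions $w_n = (\mu_\xi^n - c_*)/((1 - c_*)\mu_\xi^n)$ for appropriate $c_* \in \{u,\ell\}$ invert, on setting $n = m+1$, to the two bounds in \eqref{eq:relouca}. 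The specific forms of $u$ and $\ell$ in the statement arise from the algebraic identities $h(0) = -(\mu_\xi - 1)/(1 - u)$ (using $G_\xi(0) = \mathbb{E}[1/(\xi+1)]$) and $h(1) = -(\mu_\xi - 1)/(1 - \ell)$.

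The main technical obstacle is therefore to prove that $h$ is monotone on $[0,1]$; since $q_n$ is monotone in $[0,1)$, monotonicity of $h$ is what traps $h(q_n)$ between $h(0)$ and $h(1)$. A direct differentiation gives
\begin{equation*}
h'(s) = \frac{1}{(1-s)^2} - \frac{\mu_\xi G_\xi'(s)}{(1 - G_\xi(s))^2},
\end{equation*}
so the sign of $h'$ is that of $\Phi(s) := (1 - G_\xi(s))^2 - \mu_\xi G_\xi'(s)(1-s)^2$. Taylor expansion shows that $\Phi$ and several of its derivatives vanish at $s = 1$, with leading nonzero coefficient expressible in terms of $G_\xi''(1)$ and $G_\xi'''(1)$; the hypothesis $\mu_\xi G_\xi''(s) - G_\xi'(s) G_\xi''(1) \geq 0$ (which, upon differentiation, controls $G_\xi'''$) is precisely what is needed to propagate a definite sign for $\Phi$ from the leading Taylor coefficient back to the whole interval $[0,1]$, via repeated integration in $s$ against the boundary conditions at $s = 1$. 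Once this monotonicity is in hand, the bound \eqref{eq:relouca2} on $\mathbb{E}(R(\xi))$ follows at once from termwise summation of \eqref{eq:relouca} in the identity $\mathbb{E}(R(\xi)) = \sum_{m = 0}^{\infty} \mathbb{P}(R(\xi) > m)$.
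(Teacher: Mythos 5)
Your overall route is the same as the paper's, just unpacked by hand: the paper also reduces $R(\xi)$ to the extinction time of the branching process of Lemma \ref{lem:bp} and then invokes Hwang--Wang's best fractional linear generating function bounds together with Agresti's formula (3.1) for their iterates. Your function $h(s)=\tfrac{1}{1-s}-\tfrac{\mu_\xi}{1-G_\xi(s)}$ and the affine recursion for $w_n=1/p_n$ are exactly the mechanism by which those f.l.g.f.\ iterates are computed, and your identifications $h(0)\leftrightarrow u$ (via $G_\xi(0)=\mathbb{E}[1/(\xi+1)]$) and $h(1)=-G_\xi''(1)/(2\mu_\xi)\leftrightarrow \ell$ are correct. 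The one genuine gap is the step you flag yourself: that the hypothesis $\mu_\xi G_\xi''(s)-G_\xi'(s)G_\xi''(1)\ge 0$ forces $h$ to be monotone on $[0,1]$, equivalently forces a constant sign for $\Phi(s)=(1-G_\xi(s))^2-\mu_\xi G_\xi'(s)(1-s)^2$. Your ``Taylor expansion at $s=1$ plus repeated integration'' is a plausible outline but not an argument; note in particular that $\Phi(1)=\Phi'(1)=\Phi''(1)=0$, so the sign must be extracted from higher-order information and propagated back to all of $[0,1]$. This is precisely the content of the Corollary on p.~451 of Hwang--Wang, which the paper cites rather than proves; to make your version self-contained you must actually carry out that lemma.

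A useful by-product of your explicit recursion, had you pushed the orientation through, is that it exposes an inconsistency in the statement itself. Since $c\mapsto (c-1)x/(c-x)$ is increasing in $c$ for $0<x<1<c$, and since your sandwich gives $h(0)\le h(q_n)\le h(1)$ (e.g.\ for $\xi\equiv 2$ one computes $h(0)=-1/3<h(1)=-1/4$, $u=4/3<\ell=13/9$), the recursion yields $\frac{(u-1)\mu_\xi^{m+1}}{u-\mu_\xi^{m+1}}\le \mathbb{P}(R(\xi)>m)\le \frac{(\ell-1)\mu_\xi^{m+1}}{\ell-\mu_\xi^{m+1}}$, i.e.\ \eqref{eq:relouca} with the roles of $u$ and $\ell$ interchanged; this corrected orientation is the one consistent with \eqref{eq:relouca2} and with the explicit bounds obtained for $\mathbb{T}_2$. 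Consequently your closing claim that \eqref{eq:relouca2} ``follows at once from termwise summation of \eqref{eq:relouca}'' is false for \eqref{eq:relouca} as printed: summing the printed inequality would give \eqref{eq:relouca2} with $u$ and $\ell$ swapped. You should either correct \eqref{eq:relouca} or flag the discrepancy; as written, your last sentence silently asserts a summation that does not produce the stated display.
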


The proof of the above-theorem is left to Section \ref{s:proofs}. Let us illustrate the applicability of Theorem \ref{theo:range} with an example.

\begin{exa}\label{exe:lala}
Let $\xi\sim Binomial(3,p)$ and let $(\eta_t)_{t\ge0}$ be the MT-model on $\mathbb{T}_{\xi}$ with the standard initial configuration. In this case,

$$
\begin{array}{ccl}
G_{\xi}(s) &=&\displaystyle \frac{3p^3}{32}s^3 + \left( \frac{2p^2}{3} - \frac{37p^3}{96} \right)s^2 + \left( \frac{13p^3}{24} -\frac{5p^2}{3} +\frac{3p}{2} \right )s + \left ( \frac{-p^3}{4} + p^2 -\frac{3p}{2} +1 \right),\\[.4cm]
G'(s) &=&\displaystyle \frac{9p^3}{32}s^2 + \left ( \frac{4p^2}{3} - \frac{74p^3}{96}  \right )s + \left( \frac{13p^3}{24} -\frac{5p^2}{3} +\frac{3p}{2} \right ),\\[.4cm]
G''(s) &=&\displaystyle \frac{9p^3}{32}s + \left ( \frac{4p^2}{3} - \frac{74p^3}{96}  \right ),\\[.2cm]
\end{array}
$$

\noindent
which lead to
$$
\begin{array}{ccl}
G'(1) &=& \displaystyle\frac{5p^3}{96}  - \frac{p^2}{3} + \frac{3p}{2},\\[.3cm]  G''(1) &=& \displaystyle\frac{4p^2}{3} - \frac{5p^3}{24},\\[.3cm] 
G(0)&=&\displaystyle \frac{-p^3}{4} +p^2 - \frac{3p}{2} +1.
\end{array}
$$

Note that

$$G'(1)G''(s) - G'(s)G''(1) \geq 0\,\,\, \text{ if }\,\,\, 335p^2 -3424p+4304 >0,$$

\noindent
which holds for any $p \in [0,1]$. Therefore, we can apply Theorem \ref{theo:range} by noting that

$$\mu_{\xi} =\displaystyle \frac{p(5p^2 -32p+144)}{96},$$ 

$$\ell =\displaystyle \frac{-25p^5 + 320p^4 -2464p^3 +8736p^2 -17664p+13824}{192p(32-5p)},$$

$$u =\displaystyle \frac{(2-p)(p^4 -6p^3 +16p^2 -20p+12)(5p^2-32p+144)}{4p(64-19p)(p^2-4p+6)}.$$

\smallskip
\noindent
See Figure \ref{fig:range} for an illustration of the resulting bounds of $\mathbb{E}(R(\xi))$.

\end{exa}

	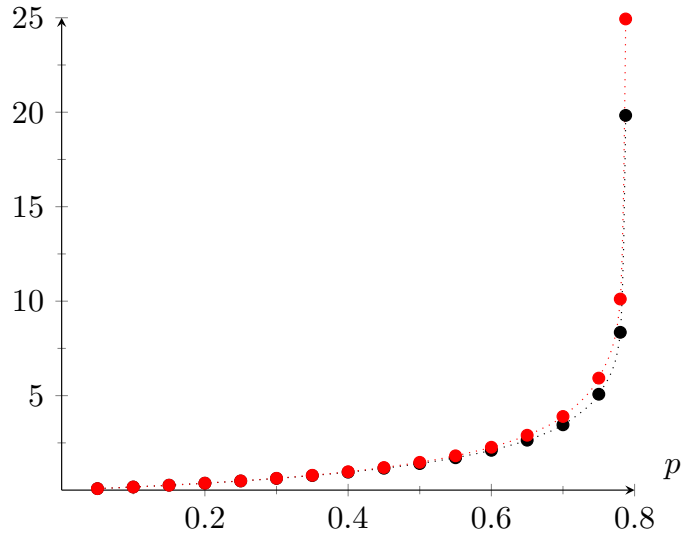
\begin{figure}[!htb]
	\pgfplotsset{my style/.append style={axis x line=middle, axis y line=
middle, xlabel={$p$}, ylabel={}}}
	\begin{tikzpicture}[scale=1.1]
\begin{axis}[my style,
xmin=0, xmax=0.8, ymin=0, ymax=25, minor tick num=1]
\addplot[
    color=black,
dotted,
mark=*,
mark options={solid},
smooth
    ]
    coordinates {
    (0.05,0.078)(0.1,0.164)(0.15,0.259)(0.2,0.365)(0.25,0.483)(0.3,0.617)(0.35,0.770)(0.4,0.948)(0.45,1.156)(0.5,1.405)(0.55,1.712)(0.6,2.105)(0.65,2.641)(0.7,3.456)(0.75,5.074)(0.78,8.351)(0.7875,19.832)
    };
 % \addlegendentry{Lower bound for $E(R(\xi))$}
    
    \addplot[
    color=red,
dotted,
mark=*,
mark options={solid},
smooth
    ]
    coordinates {
    (0.05,0.078)(0.1,0.164)(0.15,0.26)(0.2,0.367)(0.25,0.488)(0.3,0.626)(0.35,0.785)(0.4,0.973)(0.45,1.197)(0.5,1.471)(0.55,1.816)(0.6,2.269)(0.65,2.904)(0.7,3.898)(0.75,5.928)(0.78,10.113)(0.7875,24.936)
    };
 %   \addlegendentry{Upper bound for $E(R(\xi))$}
\end{axis}
\end{tikzpicture}

		\caption{Comparison between the bounds obtained in Theorem \ref{theo:range} for the range of spreading in the MT-model on $\mathbb{T}_{\xi}$ with $\xi\sim Binomial(3,p)$, see Example \ref{exe:lala}. For some values of $p$ the lower bound of $E(R(\xi))$ is represented by a black dot while the upper bound is represented by a red dot.}\label{fig:range}
	\end{figure}

\section{Proof of Theorems \ref{theo:geral} and \ref{theo:range}}\label{s:proofs}

\subsection{The rumor model seen as a branching process}
The main idea behind the proofs of Theorems \ref{theo:geral} and \ref{theo:range} is the identification of an underlying branching process related to the rumor model. After doing that we can apply well-known results of the theory of Branching Processes. This approach has been used before in \cite{junior} to study the MT-model on infinite Cayley trees. For a deeper discussion of branching processes we refer the reader to \cite[Chapter 2]{schinazi}. 
 
Consider a non-negative integer valued random variable $\xi$ and the MT-model on $\mathbb{T}_{\xi}$, $(\eta_t)_{t\ge0}$, with the standard initial configuration. For any $n\geq 0$ we let
$$\mathcal{Z}_{n}(\xi):=\left\{v\in \partial \mathbb{T}_{\xi,n+1}: \bigcup_{t> 0} \{\eta_{t}(v)=1\right\},$$
and we define the random variable $Z_n:= |\mathcal{Z}_n(\xi)|$. Thus defined, $\mathcal{Z}_{0}(\xi)$ is formed by those vertices at distance one from ${\bf 0}$ which are spreaders at some time, $\mathcal{Z}_{1}(\xi)$ is formed by those vertices at distance two from ${\bf 0}$ which are spreaders at some time, and so on. Moreover, notice that the number of spreaders one spreader generates behaves according to a independent copy of a nonnegative integer valued  random variable $X(\xi)$. Then $Z_0$ is equal to $X(\xi)$ in law, and 
\begin{equation}\label{eq:BPZ}
Z_{n+1}(\xi)=\sum_{i=1}^{Z_n(\xi)}X_i(\xi),
\end{equation}
where $X_1(\xi),X_2(\xi),\ldots$ are independent copies of $X(\xi)$. As a direct consequence of the construction above we obtain the following result which is the key to prove our theorems.

\begin{lemma}\label{lem:bp}
The stochastic process $(Z_n(\xi))_{n\geq 0}$ defined by \eqref{eq:BPZ} is a branching process with offspring's distribution given by the law of $X(\xi)$.
\end{lemma}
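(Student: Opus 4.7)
\textbf{Proof proposal for Lemma \ref{lem:bp}.} The plan is to exploit the recursive, self-similar construction of $\mathbb{T}_\xi$ together with the independence of the Poisson clocks driving the dynamics in \eqref{rates}, and then invoke the strong Markov property to decouple the evolutions rooted at distinct spreaders of a fixed generation. The key observation is that the random variable $X(\xi)$ introduced in Proposition \ref{prop:distxxi} depends only on (i) the degree of the spreading vertex, which is an independent copy of $\xi$, and (ii) a finite collection of independent exponential clocks attached to the edges incident to that vertex.

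First I would make precise the notion of the subtree rooted at a vertex $v\in\mathbb{T}_\xi$: let $\mathbb{T}_\xi(v)$ be the subtree formed by $v$ together with all its descendants. By the Galton--Watson construction, conditionally on the portion of the tree up to and including $v$, the subtree $\mathbb{T}_\xi(v)$ is an independent copy of $\mathbb{T}_\xi$ rooted at $v$. Next I would denote by $\tau_v$ the (random) time at which $v$ first becomes a spreader, adopting the convention $\tau_v=+\infty$ if this never happens. On the event $\{\tau_v<\infty\}$, at time $\tau_v$ the vertex $v$ has exactly one non-ignorant neighbor, namely its parent (who must have contacted $v$), and its children are all ignorants. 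Hence the number of successors of $v$ in $\mathbb{T}_\xi$ that eventually become spreaders depends on: the degree $\deg(v)-1$ of $v$ in the downward direction (an independent copy of $\xi$), the order in which the exponential clocks on the edges $(v,\text{child})$ and between $v$ and its parent ring, and the ensuing sibling-to-sibling competition for contacts. This is exactly the mechanism that defines $X(\xi)$ in \cite[Lemma 2]{junior}, so given $\tau_v<\infty$ the number of children of $v$ that become spreaders is distributed as $X(\xi)$, and is independent of everything outside $\mathbb{T}_\xi(v)$ and of the clocks not incident to $v$.

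I would then apply the strong Markov property at the (possibly infinite) stopping times $\{\tau_v : v\in \mathcal{Z}_{n-1}(\xi)\}$ to show that conditionally on $\mathcal{Z}_{n-1}(\xi)=\{v_1,\dots,v_k\}$, the subtrees $\mathbb{T}_\xi(v_1),\dots,\mathbb{T}_\xi(v_k)$ are independent copies of $\mathbb{T}_\xi$ (rooted at the respective $v_j$), and the Poisson clocks below them are independent families. Consequently, the contributions $W_j:=|\mathcal{Z}_n(\xi)\cap \partial \mathbb{T}_{\xi,n+1}\cap \mathbb{T}_\xi(v_j)|$ are independent, each distributed as $X(\xi)$, and $Z_n(\xi)=\sum_{j=1}^{k} W_j$. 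This is exactly the defining recursion \eqref{eq:BPZ}, proving that $(Z_n(\xi))_{n\geq 0}$ is a Galton--Watson branching process with offspring law $X(\xi)$, starting from $Z_0(\xi)\stackrel{d}{=}X(\xi)$ since the root $\mathbf{0}$ is a spreader by virtue of the contact from $s$ at time zero.

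The main obstacle is to be careful with the fact that, at the moment $\tau_v$, the parent of $v$ is a non-ignorant neighbor whose type (spreader vs.~stifler) influences only whether $v$ can in principle become a stifler through its upward edge, but not the distribution of the downward contagion: this is already built into the $(d+1)$-th neighbor counted in \cite[Lemma 2]{junior}, so it is legitimate to identify the offspring law with $X(\xi)$ uniformly for every spreader, including the root (whose ``parent'' is the auxiliary vertex $s$). Once this identification is carried out, the branching property follows from the standard conditional-independence argument and the proof is complete.
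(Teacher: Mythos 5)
Your argument is correct and takes the same route as the paper, which simply asserts the lemma as ``a direct consequence of the construction'' after defining $\mathcal{Z}_n(\xi)$ and the recursion \eqref{eq:BPZ}. Your write-up supplies the details the paper omits --- the subtree decomposition of $\mathbb{T}_\xi$, the fresh (directed) clocks examined only after each $\tau_v$, and the resulting conditional independence across a generation --- and these details are sound.
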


%\begin{proof}
%Note that
%\[ \mu_{\xi} = \sum_{d=1}^{\infty} \left [d!  \sum_{j=1}^{d} \frac{1}{(d+1)^j(d-j)!} \mathbb{P}(\xi=d) \right ]
%\]
%\end{proof}

\subsection{Proof of Theorem \ref{theo:geral}} Let $\xi$ be a non-negative integer valued random variable, and let  $(\eta_t)_{t\ge0}$ be the MT-model on $\mathbb{T}_{\xi}$ with the standard initial configuration. By Lemma \ref{lem:bp} we have that the rumor survives if, and only, if, the branching process with offspring's distribution given by the law of $X(\xi)$ survives. Therefore, $\psi(\xi)$ coincides with the extinction probability for such a branching process, which is the smallest non-negative root of the equation $G_{\xi}(s)=s.$ Moreover, also as a consequence of well-known result of branching process we have that $\psi(\xi)<1$ if, and only if, $\mathbb{E}(X(\xi))>1$. See \cite[Theorem 1.1, p. 19]{schinazi} for more details.

\subsection{Proof of Theorem \ref{theo:range}} Let $\xi$ be a non-negative integer valued random variable, and let  $(\eta_t)_{t\ge0}$ be the MT-model on $\mathbb{T}_{\xi}$ with the standard initial configuration. Thanks to the connection between our model with a branching process, stated in Lemma \ref{lem:bp}, it is enough to point out that the range of the rumor defined in \eqref{eq:range}, plus $1$, it is the same than the extinction time of the associated branching processes. Therefore we appeal to arguments of \cite{agresti} for deriving bounds for the tail of such a random time.  The main approach of \cite{agresti} consists of reducing the problem of deriving these bounds to a problem involving the analysis of a family of $p.g.f.$'s whose iterates can be calculated; i.e., the fractional linear generating functions (f.l.g.f.). We refer the reader to \cite[Definition 2]{wang}, for a suitable definition of a fractional linear generating function. Our first task is to obtain two f.l.g.f. $f_l(s)$ and $f_u(s)$ such that $f_l(s) \leq G_{\xi}(s) \leq f_u(s), 0 \leq s \leq 1$. In order to do it we apply \cite[Theorem, p. 450]{wang}, verifying its corollary of p. 451. Then, $f_l(s)$ and $f_u(s)$ are such that
\[ c_l = \frac{G_{\xi}''(1)}{2G_{\xi}'(1) + G_{\xi}''(1)} = \frac{G_{\xi}''(1)}{2\mu_{\xi} + G_{\xi}''(1)} 
\]
and
\[ c_u = \frac{G_{\xi}'(1) + G_{\xi}(0) -1}{G_{\xi}'(1)} = \frac{\mu_{\xi} + \mathbb{E} \left(\frac{1}{\xi+1} \right) - 1}{\mu_{\xi}}.
\]
Note that the inequalities hold also for the $m$-th compositions of these functions. Thus, 
\[ f_l^m(0) \leq  G_{\xi}^m(0)  \leq f_u^m(0).
\]
and this in turns implies
\[ f_l^m(0) \leq  \mathbb{P}(R(\xi) \leq m+1) \leq f_u^m(0).
\]

Now, the inequalities in \eqref{eq:relouca} follow by \cite[Eq. (3.1), p. 330]{agresti}, with

\[ \ell =  \frac{2G_{\xi}'(1) + G_{\xi}''(1) - 2 [G_{\xi}'(1)]^2}{G_{\xi}''(1)}  = 1 - \left\{\frac{2}{G_{\xi}''(1)}\right\}\,\mu_{\xi}\, (\mu_{\xi}-1)
\]
and
\[ u =  \frac{G_{\xi}(0)G_{\xi}'(1)}{G_{\xi}'(1) + G_{\xi} (0)-1} = \left\{\mu_{\xi} \,\mathbb{E} \left(\frac{1}{\xi+1} \right)\right\}\left\{\mu_{\xi} + \mathbb{E} \left(\frac{1}{\xi+1} \right)  - 1\right\}^{-1}.
\]
Finally, \eqref{eq:relouca2} is a direct consequence of \eqref{eq:relouca}.

\bigskip
\section{Discussion}

The theory of mathematical models for rumor spreading has increased in the last years. In this work we contributed with this field by advancing our understanding of the behavior of the Maki-Thompson rumor model on a random tree. Our approach rely on the comparison of the rumor model with a suitable defined branching process. This allows to obtain results regarding a phase-transition for the model, in the sense of the existence of a critical value around which the rumor either becomes extinct almost-surely or survives with positive probability. Our arguments also allow to obtain an approximation for the mean range of the rumor. The analysis proposed here can be adapted to study the model in other tree-like graphs. In addition, we emphasize that an exploration on the connection between our approach and the one developed recently by \cite{gleeson} for the description of information cascades on Twitter may be an issue of interesting research. Below, we suggest other two directions for further research. 

\subsection{The role of the underlying distribution for the localization of the critical point}

In this work we consider locally finite rooted trees with infinitely many vertices. It is more intuitive to imagine the tree as growing away from its root, in the sense that each vertex has branches leading to its successors, which are its neighbors that are further from the root. With the aim of assign an average branching number to an arbitrary infinite locally finite tree it was introduced in \cite{lyons} the so called branching number of a tree. Given a tree $\mathbb{T}$, its branching number is denoted by $br\left(\mathbb{T}\right)$ and may be seen as the exponential of the Hausdorff dimension of its boundary. The formal definition of the branching number uses the concept of flows on trees but we do not develop this point here since it exceeds the scope of the paper. A more complete theory may be obtained from \cite[Chapter 3]{lyons}. The important point to note here is that if $\mathbb{T}_{\xi}$ is a Galton-Watson tree with mean $\mathbb{E}(\xi)>1$, then $br\left(\mathbb{T}_{\xi}\right)= \mathbb{E}(\xi)$ a.s. given nonextinction. See \cite[Corollary 5.10]{lyons} for more details and note that, as a consequence, $br\left(\mathbb{T}_d\right)=d$. 

One could suspect that the branching number of a given tree has an important role for the localization of the critical parameter. More precisely, is there a critical branching number around which the rumor either becomes extinct almost-surely or survives with positive probability? With this question in mind, the known results for $\mathbb{T}_d$, for $d\geq 2$, would localize such a value between $2$ and $3$. In Example \ref{exa:binom3p} we exhibit a family of random trees $\mathbb{T}_{\xi}$ for which the rumor survives with positive probability, if and only if, $br\left(\mathbb{T}_{\xi}\right)> 2.36259$. However, in Examples \ref{exa:poisson} and \ref{exa:geom} we have a different picture. While on one hand, Example \ref{exa:poisson} provides a family of random trees for which the critical mean is approximately $2.4978$, on the other hand Example \ref{exa:geom} exhibit a family of random trees for which the critical mean is localized approximately in $2.8625$. So the question that remains is how the underlying distribution generating the tree affect the localization of the critical point. 

\subsection{The number of spreaders one spreader generates and the coupon collector´s problem}

As pointed out in \cite[Remark 1]{junior}, there exist an interesting connection between the random variables arising in our model and some variables coming from the Coupon Collector’s Problem. The problem is well-known from the Probability literature and it is usually stated as the problem of a collector who obtains, at each stage, a coupon which is
equally likely to be any one of $n$ types. Assuming that the results of successive stages are independent, among other results, one could have interest into the first stage at which all $n$ coupons have been
picked at least once. This problem gave rise to a series of generalizations which have been addressed in the literature, see for example \cite{boneh}. Also, the underlying process has been used to study a related information transmission model, see \cite{comets}. In our model, note that the number of of spreaders one spreader generates can be seen as the number of coupons that could be collected up to seeing a duplicate; that is, a coupon that already is part of the collection. Moreover, the law presented in Proposition \ref{prop:distxxi} is the one of this quantity provided we start with a random number of coupon types. Note that for the case of a fixed number $n$ of coupons, Proposition \ref{prop:distxxi} claims that the mean number of coupons that could be collected up to seeing a duplicate, provided we already have one coupon, is given as $e^{n}\, n^{-(n-1)}\, \Gamma (n,n),$ where $\Gamma\left(k,x\right)$ is the incomplete gamma function. Therefore, some advances in the understanding of the (asymptotic) behavior of this distribution could bring some new results for rumor models like ours.

%Let $\xi$ be a non-negative integer valued random variable, and let $(\eta_t)_{t\ge0}$ be the MT-model on $\mathbb{T}_{\xi}$ with the standard initial configuration. From the previous sections we known that $\mathbb{E}(X(\xi))$, the mean of the number of spreaders one spreader generates, has an important role to guarantee the propagation of the rumor with positive probability. In this section, which is of interest by itself, we discuss some properties of this mean as a function of the parameters of $\xi$.  

%{\color{red}Discutir se vale a pena trazer alguns dos resultados do Adalto aqui.}

\section*{Acknowledgements}
This work has been developed with support of the Brazilian Federal Agency for Support and Evaluation of Graduate Education (CAPES), Financial Code 001. This work has been supported also by FAPESP (2017/10555-0). The authors thanks E. Lebensztayn for the fruitful discussions. % Special thanks are also due to the two anonymous reviewers for their helpful comments and suggestions.

\end{document}